\documentclass[letterpaper,10pt]{article}

\usepackage[latin1]{inputenc}
\usepackage[T1]{fontenc}
\usepackage{enumerate}
\usepackage[all]{xy}
\usepackage{amsmath,amsthm,amsfonts,amssymb}
\usepackage[pdftex,bookmarks=false]{hyperref}

\setlength{\oddsidemargin}{.6cm}
\setlength{\textwidth}{14.5cm}

\newtheorem{lem}{Lemma}

\newtheorem{thm}[lem]{Theorem}
\newtheorem*{thm*}{Theorem}
\newtheorem*{thmfr*}{Théorème}
\newtheorem*{thmzp*}{$\mathbf{Z^*_p}$-theorem}
\newtheorem*{thmfrz*}{$\mathbf{Z^*}$-théorème}

\theoremstyle{definition}
\newtheorem{defn}[lem]{Definition}
\newtheorem{rmk}[lem]{Remark}

\newcommand\catname[1]{\mathbf{#1}}

\newcommand\catob{\catname{Ob}}

\newcommand\catmod[1]{{\null_{#1}\catname{Mod}}}

\newcommand\catperm[1]{{\null_{#1}\catname{Perm}}}

\newcommand\catm[1]{{\null_{#1}\catname{M}}}

\newcommand\catf{\catname{F}}
\newcommand\catbr{\catname{Br}}
\newcommand\catfr{\catname{Fr}}

\DeclareMathOperator{\Homom}{Hom}

\DeclareMathOperator{\Endom}{End}
\DeclareMathOperator{\Res}{Res}

\DeclareMathOperator{\Ind}{Ind}
\DeclareMathOperator{\Br}{Br}
\DeclareMathOperator{\br}{br}

\DeclareMathOperator{\Tr}{Tr}

\DeclareMathOperator{\Sl}{Sl}

\newcommand\isom{\tilde{\,\to}\,}
\newcommand\longisom{\tilde{\,\longrightarrow}\,}

\newcommand\ringO{\mathcal O}

\newcommand\pprime{{\prime\prime}} 
\newcommand\dade{\mathcal D}

\title{Brauer-friendly modules and slash functors}
\author{Erwan Biland}
\date{\today}

\begin{document}

\maketitle

\begin{abstract}
This paper introduces the notion of \emph{Brauer-friendly modules}, a generalisation of endo-$p$-permutation modules. A module over a block algebra $\ringO Ge$ is said to be Brauer-friendly if it is a direct sum of indecomposable modules with compatible fusion-stable endopermutation sources. We obtain, for these modules, a functorial version of Dade's slash construction, also known as deflation-restriction. We prove that our \emph{slash functors}, defined over \emph{Brauer-friendly categories}, share most of the very useful properties that are satisfied by the Brauer functor over the category of $p$-permutation $\ringO Ge$-modules. In particular, we give a parametrisation of indecomposable Brauer-friendly modules, which opens the way to a complete classification whenever the fusion-stable sources are classified. Those tools have been used to prove the existence of a stable equivalence between non-principal blocks in the context of a minimal counter-example to the odd $Z^*_p$-theorem.
\end{abstract}

\section*{Introduction}

The $p$-permutation modules, or trivial-source modules, are an essential tool of the local representation theory of finite groups. What makes them so useful is the existence of a \emph{localising} tool, the Brauer functor, which satisfies nice properties such as transitivity, and provides a simple classification of indecomposable $p$-permutation modules, as proven by Broué in \cite[Theorem 3.2~(3)]{Broué1985}. In particular, the properties of the Brauer functor have enabled Rickard, in \cite{Rickard1996}, to prove that a derived equivalence defined by a so-called \emph{splendid} complex of bimodules induces a collection of local derived equivalences.

A possible generalisation of $p$-permutation modules is the notion of endo-$p$-permutation modules, as described by Urfer in \cite{Urfer2007}. Let $\ringO$ be a discrete valuation ring with positive residual characteristic $p$. An indecomposable $\ringO G$-module $M$ is an endo-$p$-permutation module if its source $V$ with respect to a given vertex $P$ is an endopermutation $\ringO P$-module that is fusion-stable with respect to the fusion system $\catf_G(P)$ defined by the group $G$ on the $p$-subgroup~$P$. This characterisation suggests that endo-$p$-permutation modules are not the most natural notion when one wants to take blocks into account. Indeed, a non-principal block $e$ of the group $G$ usually defines a fusion system on a $p$-subgroup $P$ that is finer than the fusion system $\catf_G(P)$ defined by the group $G$. This is why we introduce a more general notion. We say that an $\ringO Ge$-module $M$ is Brauer-friendly if it is a direct sum of indecomposable modules with endopermutation sources that are fusion-stable with respect to the block $e$, as defined in \cite{LinckelmannMazza}, and moreover compatible with one another.

Although they do not seem to have been explicitly described before, these modules are ubiquitous in modular representation theory. Indeed, let $M$ be an indecomposable bimodule that induces a Morita or stable equivalence between two block algebras. If $M$ admits a diagonal vertex, it follows from \cite[\S 7]{Puig1999} that a source of $M$ must be a fusion-stable endopermutation module, \emph{i.e.}, $M$ must be a Brauer-friendly module. Puig usually studies Brauer-friendly modules by turning them into endopermutation modules over source algebras.
However, it seems reasonable to study Brauer-friendly modules directly at the level of block algebras, a more common framework in modular representation theory. Such is the aim of the present paper. 

In Sections \ref{sec:prelim} and \ref{sec:VertexSubGreenCorr}, we review the theory of subpairs that has been defined in \cite{AlperinBroué}. Then we refine the definition of the Brauer functor, as well as Green's theory of vertices and sources (following \cite{Sibley1990}) to take subpairs into consideration.
In Section \ref{sec:BFModules}, we define the notion of a Brauer-friendly module. We prove that a module $M$ over a block algebra $\ringO Ge$ with defect group $D$ is Brauer-friendly if, and only if, the corresponding module over a source algebra $A$ of $\ringO Ge$ is an endopermutation $\ringO D$-module.

In Sections \ref{sec:SlashFunc} and \ref{sec:Slash}, we prove that Dade's localising tool, known as the slash construction or deflation-restriction, can be applied to Brauer-friendly modules. We also show that this construction can be turned into a functor, provided that one focuses on what we call a Brauer-friendly category, \emph{i.e.}, a category of compatible Brauer-friendly modules. This functoriality may have important consequences in local representation theory. For instance, by allowing one to \emph{localise} a complex of compatible Brauer-friendly modules, it would make it easier to deal with the local properties of a derived equivalence between block algebras, even though this equivalence is not defined by a splendid complex.

In Section \ref{sec:ParamIndBF}, we use the properties of slash functors to give a practical parametrisation of indecomposable Brauer-friendly modules. As mentioned above, a Morita or stable equivalence between block algebras is often defined by such a module, so our parametrisation can be an important tool when it comes to finding a bimodule that induces such an equivalence.

Let us give a more precise example of application. The search for a modular proof of the odd $Z^*_p$-theorem is an important open question of the representation theory of finite groups. In the context of a (putative) minimal counter-example to that theorem, thanks to the above tools, we have been able to consider a family of local Morita equivalences and \emph{glue} them together to obtain a stable equivalence between non-principal block algebras, as appears in \cite[Chapter 4]{Biland2013}.

\section{Subpairs and the Brauer functor}
\label{sec:prelim}

We first give a few notations that we use throughout this article. We let $\ringO$ be a complete discrete valuation ring with algebraically closed residue field $k$ of characteristic $p$. This includes the case $\ringO=k$, so that every result that is proven over the ring $\ringO$ remains true over the field~$k$. 

For any finite group $G$, we denote by $\Delta G =\{(g,g); g\in G\}$ the diagonal subgroup of the direct product $G\times G$. We denote by $\catmod{\ringO G}$ the category of $\ringO G$-modules and morphisms of $\ringO G$-modules, and by $\catperm{\ringO G}$ the full subcategory of $p$-permutation $\ringO G$-modules, \emph{i.e.}, direct summands of $\ringO G$-modules that are $\ringO$-free and admit a $G$-stable basis.
For an element $g\in G$ and an object $X$, the notation $^gX$ stands for the object $gXg^{-1}$ whenever this makes sense. For any element $x$ of the group algebra $\ringO G$, we denote by $\bar x$ its image by the natural projection map $\ringO G\twoheadrightarrow kG$. For any two groups $G$ and $H$, an $(\ringO G,\ringO H)$-bimodule $M$ will be considered as an $\ringO (G\times H)$-module by setting
\[
(g, h)\cdot m = g\cdot m\cdot h^{-1}
\qquad
\text{for any $g\in G$, $h\in H$ and $m\in M$.}
\]

We recall the definition of subpairs from \cite{AlperinBroué}.
Let $G$ be a finite group, and $e$ be a block of the algebra $\ringO G$. 
A subpair of the group $G$ is a pair $(P,e_P)$, where $P$ is a $p$-subgroup of $G$ and $e_P$ is a block of the group algebra $\ringO C_G(P)$. The subpair $(P,e_P)$ is an $e$-subpair if $\bar e_P\br_P(e)\neq0$, where $\br_P:(\ringO G)^P\to kC_G(P)$ denotes the Brauer morphism. The idempotent $e_P$ is a block of the algebra $\ringO H$ whenever $H$ is a local subgroup of $G$ with respect to the subpair $(P,e_P)$, \emph{i.e.}, a subgroup such that $C_G(P)\leqslant H\leqslant N_G(P,e_P)$.

The group $G$ acts by conjugation on the poset of $e$-subpairs of $G$. This action can be described by the Brauer category $\catbr(G,e)$, defined as follows. An object is an $e$-subpair $(P,e_P)$. An arrow $\phi:(P,e_P)\to (Q,e_Q)$ is a group morphism $\phi:P\to Q$ such that there exists an element $g\in G$ that satisfies $^g(P,e_P) \leqslant (Q,e_Q)$ and $\phi(x) = \null^gx$ for any $x\in P$. The composition of arrows is the usual composition of group morphisms.
The Brauer category $\catbr(G,e_0)$ of the principal block $e_0$ actually describes the action of the group $G$ on the poset of its $p$-subgroups, since an $e_0$-subpair $(P,e_P)$ is completely determined by the $p$-subgroup $P$. This is called the Frobenius category of the group $G$, and denoted by $\catfr(G)$.

Let $(D,e_D)$ be a maximal $e$-subpair, \emph{i.e.}, an $e$-subpair such that the $p$-subgroup $D$ of $G$ is a defect group of the block $e$. Let $\catf_{(G,e)}(D,e_D)$ be the full subcategory of $\catbr(G,e)$ of which the objects are the $e$-subpairs contained in $(D,e_D)$. This subcategory is called the fusion system of the block $e$ with respect to the subpair $(D,e_D)$; it is equivalent to the Brauer category $\catbr(G,e)$. A source idempotent of the block $e$ with respect to the maximal subpair $(D,e_D)$ is a primitive idempotent $i$ of the algebra $(\ringO Ge)^D$ such that $\bar e_D\br_D(i)\neq 0$. The $D$-interior algebra $A=i\ringO Gi$ is called a source algebra of the block $e$. The $(A,\ringO Ge)$-bimodule $i\ringO G$ induces a Morita equivalence $A\sim \ringO Ge$. Moreover, the fusion system $\catf_{(G,e)}(D,e_D)$ can be read in the $\ringO(D\times D)$-module $A$. More details on this approach may be found in \cite{LinckelmannUnpublished}.

We now recall and slightly generalise the classical definition of the Brauer functor.
Let $G$ be a finite group and $P$ be a $p$-subgroup of $G$. We write $\bar N_G(P) = N_G(P)/P$. For any $\ringO G$-module $M$, we denote by $\Br_P(M)$ the Brauer quotient of $M$, which some authors denote by $M(P)$, \emph{i.e.}, the $k\bar N_G(P)$-module
\[
\Br_P(M) \ = \ M^P \Bigl/ \Bigl(\sum_{Q<P}\Tr_Q^P(M^Q) +\mathfrak m M^P\Bigr),
\]
where $M^P$ is the submodule of $P$-fixed points in $M$, $\Tr_Q^P:M^Q\to M^P$ is the relative trace map and $\mathfrak m$ is the maximal ideal of the local ring $\ringO$. We denote by $\br_P^M:M^P\to \Br_P(M)$ the projection map. Any morphism of $\ringO P$-modules $u:L\to M$ induces a $k$-linear map $\Br_P(u):\Br_P(L)\to\Br_P(M)$. If $u$ is a morphism of $\ringO G$-modules, then $\Br_P(u)$ is a morphism of $k\bar N_G(P)$-modules. This defines a functor
\[
\Br_P : \catmod{\ringO G} \ \to\ \catmod{k\bar N_G(P)}.
\]
Notice that we write the Brauer functor $\Br_P$ with a capital B, and the Brauer map $\br_P$ with a lowercase b. This tool can be adapted to take subpairs into consideration. Let $e$ be a block of the algebra $\ringO G$, $(P,e_P)$ be an $e$-subpair of the group $G$, and $M$ be an $\ringO Ge$-module. We define the Brauer quotient of $M$ with respect to the subpair $(P,e_P)$ by setting
\[
\Br_{(P,e_P)}(M) \ = \ \Br_P(e_P M).
\]
This generalised Brauer quotient has a natural structure of $k\bar N_G(P,e_P)\bar e_P$-module, where we write $\bar N_G(P,e_P) = N_G(P,e_P)/P$. Notice that we slightly abuse notations by identifying the block $\bar e_P\in kN_G(P,e_P)$ to a block of the quotient algebra $k\bar N_G(P,e_P)$.

Let $L$ and $M$ be two $\ringO Ge$-modules, and let $u:L\to M$ be a morphism of $\ringO P$-modules. Then the map $u$ induces a morphism of $\ringO P$-modules $e_Pue_P:e_PL\to e_P M$.
We set $\Br_{(P,e_P)}(u)=\Br_P(e_Pue_P)$, a $k$-linear map from $\Br_{(P,e_P)}(L)$ to $\Br_{(P,e_P)}(M)$.
If $u:L\to M$ is a morphism of $\ringO Ge$-modules, then $\Br_{(P,e_P)}(u)$ is a morphism of $k\bar N_G(P,e_P)\bar e_P$-modules. This defines a functor
\[
\Br_{(P,e_P)} : \catmod{\ringO Ge} \ \to\ \catmod{k\bar N_G(P,e_P)\bar e_P}.
\]

\section{Vertex subpairs and the Green correspondence}
\label{sec:VertexSubGreenCorr}

In this section, we review the notion of a vertex subpair of an indecomposable module, which has been defined in \cite{Sibley1990}. Let $G$ be a finite group, and $e$ be a block of the algebra $\ringO G$. The following lemma extends Green's theory of vertices and sources, as well as the Green correspondence, to take $e$-subpairs into consideration.

\begin{lem}
\label{lem:vertex subpair}
Let $M$ be an indecomposable $\ringO Ge$-module and $(P,e_P)$ be an $e$-subpair of the group $G$. The following conditions are equivalent.
\vspace{-\smallskipamount}
\begin{enumerate}[(i)]
\item The $p$-subgroup $P$ is contained in a vertex of $M$, and $M$ is isomorphic to a direct summand of the $\ringO Ge$-module $e\ringO Ge_P\otimes_{\ringO P} V$ for some indecomposable $\ringO P$-module $V$.
\item The $\ringO G$-module $M$ is relatively $P$-projective, and the $\ringO P$-module $e_PM$ admits an indecomposable direct summand $V$ with vertex $P$.
\item
The $\ringO N_G(P,e_P)$-module $e_PM$ admits an indecomposable direct summand $L$ with vertex $P$ such that $M$ is isomorphic to a direct summand of the induced module $\Ind_{N_G(P,e_P)}^G L$
\item
The $p$-group $P$ is a vertex of $M$, and the Green correspondent of $M$ with respect to this vertex is an $\ringO N_G(P)$-module $M'$ that belongs to the block $e'_P=\Tr_{N_G(P,e_P)}^{N_G(P)} e_P$ of the algebra $\ringO N_G(P)$.
\end{enumerate}
If these conditions are satisfied, then the $\ringO N_G(P,e_P)$-module $L$ of (iii) and the $\ringO N_G(P)$-module $M'$ of (iv) satisfy the relations
\vspace{-\smallskipamount}
\[
M'\ \simeq\ \Ind_{N_G(P,e_P)}^{N_G(P)} L
\quad\text{ and }\quad
L\ \simeq\ e_P M'.
\]
\end{lem}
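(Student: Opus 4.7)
The lemma essentially concatenates Green's classical theory of vertices and sources with the block-refinement given by the Brauer pair $(P,e_P)$, so the plan is to reduce to these two classical inputs. I would prove the cycle (iv)$\Rightarrow$(iii)$\Rightarrow$(ii)$\Rightarrow$(i)$\Rightarrow$(iv), extracting the two displayed isomorphisms along the way from (iv)$\Rightarrow$(iii).

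For (iv)$\Rightarrow$(iii): the block $e'_P=\Tr_{N_G(P,e_P)}^{N_G(P)}e_P$ is an orthogonal sum of its $N_G(P)$-conjugates, and a standard Clifford-style argument shows that $L\mapsto\Ind_{N_G(P,e_P)}^{N_G(P)}L$ and $M'\mapsto e_PM'$ are mutually inverse, vertex-preserving equivalences between the categories of $\ringO N_G(P,e_P)e_P$-modules and $\ringO N_G(P)e'_P$-modules. Setting $L:=e_PM'$ yields an indecomposable $\ringO N_G(P,e_P)$-module of vertex $P$, the two displayed isomorphisms, and by transitivity of induction $M\mid\Ind_{N_G(P)}^G M'\simeq\Ind_{N_G(P,e_P)}^GL$.

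The remaining implications are more mechanical. (iii)$\Rightarrow$(ii) follows because relative $P$-projectivity is inherited from $L$, and Mackey applied to $\Res_{N_G(P,e_P)}^G\Ind_{N_G(P,e_P)}^GL$ together with the orthogonality of $G$-conjugates of $e_P$ shows $L\mid e_PM$, so a source of $L$ gives the desired summand of $e_PM$. For (ii)$\Rightarrow$(i) I would use the tensor--Hom adjunction
\[
\Homom_{\ringO Ge}\bigl(e\ringO Ge_P\otimes_{\ringO P}V,\ M\bigr)\ \cong\ \Homom_{\ringO P}\bigl(V,\ e_PM\bigr)
\]
to turn the inclusion $V\hookrightarrow e_PM$ into an evaluation map $\epsilon\colon e\ringO Ge_P\otimes_{\ringO P}V\to M$ whose restriction to $1\otimes V$ identifies with the chosen inclusion, and then invoke Krull--Schmidt together with the indecomposability of $M$ to split $\epsilon$. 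Finally, for (i)$\Rightarrow$(iv), the fact that $e\ringO Ge_P\otimes_{\ringO P}V$ is a summand of $\Ind_P^GV$ forces $M$ to have vertex exactly $P$, and evaluating the classical Green correspondent $M'$ shows $e_PM'\ne0$, which pins down the block of $M'$ to be $e'_P$.

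The main obstacle will be (ii)$\Rightarrow$(i): making the splitting of $\epsilon$ genuinely rigorous, since the adjunction by itself only guarantees a map, not a split surjection, onto the $V$-isotypic part of $e_PM$. The cleanest remedy I would use is to apply the Brauer functor $\Br_{(P,e_P)}$ and reduce to the classical fact that an indecomposable module with source $V$ is a summand of $\Ind_P^GV$; alternatively one can pass through the source algebra of $e$, where the statement reduces to the assertion that the module corresponding to $M$ has $V$ as an $\ringO P$-summand of its restriction.
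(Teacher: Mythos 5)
Your cycle (iv)$\Rightarrow$(iii)$\Rightarrow$(ii)$\Rightarrow$(i)$\Rightarrow$(iv) is structurally different from the paper's argument, which treats (iii)$\Leftrightarrow$(iv) as a hub (via the Harris Morita equivalence between $\ringO N_G(P,e_P)e_P$ and $\ringO N_G(P)e'_P$) and then shows (iii)$\Rightarrow$(i), (iii)$\Rightarrow$(ii), (i)$\Rightarrow$(iv), (ii)$\Rightarrow$(iv). The paper never proves (ii)$\Rightarrow$(i) directly, and that is exactly where your proposal has a real gap.

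The difficulty you flag is also, I think, misdiagnosed. From (ii) one gets without much trouble that $P$ is a vertex of $M$ and that $V$ is a source (since the vertex-$P$ summands of $\Res^G_P M$ are precisely the $N_G(P)$-conjugates of a source), hence $M \mid \Ind_P^G V$ and therefore $M \mid e\,\ringO G\otimes_{\ringO P} V$. That much is the ``classical fact'' you invoke. The genuine obstacle is the next step: $e\,\ringO G\otimes_{\ringO P} V = (e\ringO G e_P\otimes_{\ringO P} V)\ \oplus\ (e\ringO G(1-e_P)\otimes_{\ringO P} V)$, and Krull--Schmidt only tells you $M$ is a summand of one of the two pieces; you must rule out the second. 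Neither of your proposed remedies resolves this. Applying $\Br_{(P,e_P)}$ does not help, because $V$ is an arbitrary indecomposable $\ringO P$-module with vertex $P$, not a ($p$-)permutation module, so $\Br_P(e_P M)$ can perfectly well vanish and the functor carries no information here. Passing to the source algebra is circular at this stage, since the dictionary between source-algebra modules and $\ringO Ge$-modules over a subpair is exactly what Lemmas~\ref{lem:vertex subpair} and \ref{lem:vertex subpair and Brauer} are being used to set up. The paper sidesteps the whole issue by proving (ii)$\Rightarrow$(iv) instead: take the Green correspondent $M'$ of $M$ and the block $f$ of $\ringO N_G(P)$ with $fM'=M'$; uniqueness of the Green correspondent forces every vertex-$P$ summand of $\Res^G_P M$ to lie in $fM$, so the hypothesis $V\mid e_PM$ gives $fe_P\neq 0$ and hence $f=e'_P$. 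If you want to keep your cycle you should replace (ii)$\Rightarrow$(i) by this Green-correspondent argument, i.e.\ route through (iv), or else supply a genuine proof that $M\nmid e\ringO G(1-e_P)\otimes_{\ringO P}V$ (for instance, a Mackey computation of the vertex-$P$ part of $\Res^G_P\bigl(e\ringO G(1-e_P)\otimes V\bigr)$ using that $e_P$ and $^g e_P$ are orthogonal for $g\notin N_G(P,e_P)$, together with a Burry--Carlson--Puig-type uniqueness argument).

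Two smaller remarks. Your (i)$\Rightarrow$(iv) sketch (``evaluating the Green correspondent shows $e_P M'\neq 0$'') is asserting the right thing, but as written it is not an argument; the clean way, as in the paper, is to rewrite $\ringO Ge_P\otimes_{\ringO P}V\simeq \Ind_{N_G(P)}^G\bigl(\ringO N_G(P)e_P\otimes_{\ringO P}V\bigr)$ and observe that the Green correspondent of $M$ is a direct summand of $\ringO N_G(P)e_P\otimes_{\ringO P}V$, which lies in the block $e'_P$. Your (iii)$\Rightarrow$(ii) is correct but heavier than needed: since (iii) already asserts $L\mid e_PM$ as $\ringO N_G(P,e_P)$-modules, it suffices to restrict a source of $L$ to $P$; the Mackey/orthogonality computation is unnecessary.
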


\begin{proof}
With the notations of Condition (iv), we know from \cite[Theorem 1.6]{Harris2007} that the restriction/induction functors $e_P \Res^{\,N_G(P)}_{N_G(P,e_P)}$ and $\Ind^{\,N_G(P)}_{N_G(P,e_P)}$ define a Morita equivalence between the block algebras
$\ringO N_G(P,e_P)e_P$ and $\ringO N_G(P)e_P'$.
The equivalence (iii)$\Leftrightarrow$(iv), as well as the last statement of the Lemma, immediately follow from this Morita equivalence and the classical definition (and uniqueness) of the Green correspondent of $M$.

We now suppose that (iii) is satisfied. Then the restriction $\Res^{N_G(P,e_P)}_P L$ admits a direct summand $V$ with vertex $P$, so the $\ringO P$-module $e_PM$ also admits $V$ as a direct summand. Moreover, the induced module $\Ind_{N_G(P,e_P)}^G L$ is relatively $P$-projective, so the $\ringO G$-module $M$ is relatively $P$-projective. Since $V$ is also a direct summand of $M$ and admits $P$ as a vertex, it follows that the $p$-group $P$ is a vertex of $M$. Moreover, the $\ringO P$-module $V$ is a source of $L$. Thus $L$ is isomorphic to a direct summand of the $\ringO N_G(P,e_P)e_P$-module $\ringO N_G(P,e_P)e_P \otimes_{\ringO P} V$, and $M$ to a direct summand of the $\ringO Ge$-module
\[
e\Ind_{N_G(P,e_P)}^G \Bigl( \ringO N_G(P,e_P)e_P \otimes_{\ringO P} V \Bigr)
\ \simeq \ 
e\ringO Ge_P \otimes_{\ringO P} V.
\]
We have proven that the condition in (iii) implies (i) and (ii).

Next, we suppose that (i) is satisfied. The $\ringO G$-module $M$ is relatively $P$-projective and the $p$-group $P$ is contained in a vertex of $M$, so $P$ itself is a vertex of $M$. Moreover, $M$ is isomorphic to a direct summand of the $\ringO Ge$-module 
\[
\ringO Ge_P\otimes_{\ringO P} V
\ \simeq\
\Ind_{N_G(P)}^G \bigl( \ringO N_G(P)e_P\otimes_{\ringO P} V \bigr ),
\]
so there is an indecomposable direct summand $M'$ of the $\ringO N_G(P)e'_P$-module $\ringO N_G(P)e_P\otimes_{\ringO P} V$ such that $M$ is isomorphic to a direct summand of $\Ind_{N_G(P)}^G M'$. The $\ringO N_G(P)$-module $M'$ is therefore a Green correspondent of $M$ with respect to the vertex $P$, so that (i) implies (iv).

Finally, we suppose that (ii) is satisfied. Then the restriction $\Res^G_P M$ admits an indecomposable direct summand with vertex $P$, and the $p$-group $P$ contains a vertex of $M$, so $P$ itself is a vertex of $M$. Let $M'$ be a Green correspondent of $M$ with respect to this vertex, and let $f$ be the block of the algebra $\ringO N_G(P)$ such that $fM'=M'$. Then the uniqueness of the Green correspondent implies that no indecomposable direct summand of the $\ringO P$-module $(1-f)M$ admits $P$ as a vertex. Since $e_PM=fe_PM\oplus (1-f)e_P M$, it follows that $fe_P\neq 0$, so that $f=\Tr_{N_G(P,e_P)}^{N_G(P)} e_P$. Thus (ii) implies (iv), and the four conditions are equivalent. 
\end{proof}

We derive from the above lemma a few definitions and easy consequences that extend some of Green's classical definitions.

Let $M$ be an indecomposable $\ringO Ge$-module. It follows from Nagao's theorem (see, \emph{e.g.}, \cite[Theorem 6.3.1]{Benson1991}) that there exists an $e$-subpair $(P,e_P)$ of the group $G$ that satisfies the statement in Lemma \ref{lem:vertex subpair} (iv), hence the statements in (i), (ii) and (iii). Such an $e$-subpair is called a vertex subpair of the indecomposable module $M$ (this is consistent with \cite[Definition 2.6]{Sibley1990}). A source of $M$ with respect to the vertex subpair $(P,e_P)$ is an $\ringO P$-module $V$ that satisfies any one of the equivalent conditions (i) and (ii). A source triple of $M$ is a triple $(P,e_P,V)$, where $V$ is a source of $M$ with respect to the vertex subpair $(P,e_P)$. The source triples of $M$ form an orbit under the action of the group $G$ by conjugation.

A Green correspondent of $M$ with respect to the vertex subpair $(P,e_P)$ is an $\ringO N_G(P,e_P)$-module $L$ that satisfies the condition in (iii). The mapping $M\mapsto L$ induces a one-to-one correspondence between the isomorphism classes of indecomposable $\ringO Ge$-modules with vertex subpair $(P,e_P)$, and the isomorphism classes of indecomposable $\ringO N_G(P,e_P)e_P$-modules with vertex $P$.

The following properties of vertex subpairs and sources are closer to the approach of \cite{Sibley1990}.
 
\begin{lem}
\label{lem:vertex subpair and Brauer}
Let $M$ be an indecomposable $\ringO Ge$-module and $(P,e_P,V)$ be a source triple of $M$.
\begin{enumerate}[(i)]
\item 
There exists a primitive idempotent $i$ of the algebra $(\ringO G)^P$ such that $\bar e_P\br_P(i)\neq 0$ and that $M$ is isomorphic to a direct summand of the $\ringO Ge$-module $\ringO Gi\otimes_{\ringO P} V$.
\item 
There exists a defect group $D$ of the block $e$ such that $P\leqslant D$, there exists a primitive idempotent $j$ of the algebra $(\ringO G)^D$ such that $\br_D(j)\neq 0$ and $\bar e_P\br_P(j)\neq 0$, and that $M$ is isomorphic to a direct summand of the $\ringO Ge$-module $\ringO Gj\otimes_{\ringO D} \Ind_P^D V$.
\end{enumerate}
\end{lem}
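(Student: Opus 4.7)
My plan for part (i) is to convert the characterisation of Lemma~\ref{lem:vertex subpair}(i) into an idempotent decomposition inside $(\ringO G)^P$. Since $e$ is central, the module $e\ringO Ge_P\otimes_{\ringO P} V$ coincides with $\ringO G(ee_P)\otimes_{\ringO P} V$ and is realised as the image of the idempotent endomorphism of $\Ind_P^G V = \ringO G\otimes_{\ringO P} V$ given by right multiplication by $ee_P \in (\ringO G)^P$. I would pick a decomposition $ee_P = \sum_\alpha i_\alpha$ into pairwise orthogonal primitive idempotents of $(\ringO G)^P$, so that
\[
\ringO G(ee_P)\otimes_{\ringO P} V \ = \ \bigoplus_\alpha \ringO Gi_\alpha\otimes_{\ringO P} V,
\]
and use the indecomposability of $M$ to extract a summand $\ringO Gi\otimes_{\ringO P} V$ containing $M$, with $i$ primitive in $(\ringO G)^P$. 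Since $i \in ee_P(\ringO G)^P ee_P$ one has $i = e_P i$, and together with $\br_P(e_P)=\bar e_P$ this gives $\bar e_P\br_P(i)=\br_P(i)$.

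The remaining step of (i), and the conceptual crux, is to show $\br_P(i)\neq 0$. The approach is by contradiction: if $\br_P(i)=0$, Puig's defect theory of pointed groups produces a proper subgroup $Q<P$ and an element $c\in(\ringO G)^Q$ such that $i=\Tr_Q^P(c)$ (possibly after replacing $i$ by a $P$-associate). The right-multiplication endomorphism $r_i$ of $\Ind_P^G V$ then factors as
\[
\Ind_P^G V \xrightarrow{\sigma} \Ind_Q^G V \xrightarrow{r_c} \Ind_Q^G V \xrightarrow{\pi} \Ind_P^G V,
\]
where $\pi$ is the canonical surjection and $\sigma: x\otimes v\mapsto \sum_{h\in[P/Q]}xh\otimes h^{-1}v$ is a well-defined $\ringO G$-linear map. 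A direct verification gives $\pi r_c\sigma=r_i$, so the image $\ringO Gi\otimes_{\ringO P} V$ of the idempotent $r_i$ is an $\ringO G$-summand of the $Q$-projective module $\Ind_Q^G V$, hence itself $Q$-projective. Its summand $M$ would then have vertex strictly contained in $P$, contradicting the hypothesis.

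For part (ii), the plan is to enlarge $i$ to a primitive idempotent of $(\ringO G)^D$ for a suitably chosen defect group. Puig's theorem on maximal pointed subgroups of the pointed algebra $G_{\{e\}}$ produces, for the pointed group attached to $i$, a defect group $D$ of $e$ with $P\leqslant D$ (up to $G$-conjugacy, which can be absorbed into the choice of $(P,e_P)$) and a primitive idempotent $j\in(\ringO G)^D$ with $\br_D(j)\neq 0$ and $i\mid j$ in $(\ringO G)^P$. The block $e_D$ of $\ringO C_G(D)$ characterised by $\bar e_D\br_D(j)\neq 0$ then makes $(D,e_D)$ a maximal $e$-subpair above $(P,e_P)$ by the Alperin-Brou\'e correspondence. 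The divisibility $\ringO Gi\mid\ringO Gj$ of $(\ringO G,\ringO P)$-bimodules yields
\[
M \ \mid\ \ringO Gi\otimes_{\ringO P} V \ \mid\ \ringO Gj\otimes_{\ringO P} V \ \simeq\ \ringO Gj\otimes_{\ringO D} \Ind_P^D V,
\]
and $\bar e_P\br_P(i)$ appears as a nonzero summand of $\bar e_P\br_P(j)$ in the refinement of $j$ into primitive idempotents of $(\ringO G)^P$, which forces $\bar e_P\br_P(j)\neq 0$.

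The main obstacle I expect is the vanishing-to-factorisation reduction of the second paragraph: turning the purely algebraic condition $\br_P(i)=0$ into the module-theoretic conclusion that $\ringO Gi\otimes_{\ringO P} V$ is $Q$-projective. The remaining arguments are essentially bookkeeping around Alperin-Brou\'e and idempotent refinements, whereas this is the step where Puig's defect theory of pointed groups genuinely enters and where the vertex hypothesis on $M$ is actually used to produce a contradiction.
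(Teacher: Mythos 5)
Your proposal is correct and follows essentially the same approach as the paper: in (i) decompose $ee_P$ into orthogonal primitive idempotents of $(\ringO G)^P$, use Krull--Schmidt to pick out the summand $\ringO Gi\otimes_{\ringO P}V$ containing $M$, and derive a contradiction from $\br_P(i)=0$ via relative $Q$-projectivity; in (ii) embed the local pointed group $P_\alpha$ attached to $i$ into a defect pointed group $D_\beta$ and take $j\in\beta$ with $ij=ji=i$. The explicit factorisation $\pi\circ r_c\circ\sigma = r_i$ through $\Ind_Q^G V$ spells out what the paper dispatches with a one-line appeal to relative projectivity, and the passage from $\br_P(i)=0$ to $i\in\Tr_Q^P((\ringO G)^Q)$ for a single $Q<P$ is just Rosenberg's lemma rather than Puig's defect theory, but the substance of the argument is the same.
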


\begin{proof}
By assumption, $M$ is a direct summand of the $\ringO Ge$-module $e\ringO Ge_P\otimes_{\ringO P} V$. Consider a decomposition $ee_P = i_1+\ldots+i_n$ of the idempotent $ee_P$ into mutually orthogonal primitive idempotents in the algebra $(\ringO G)^P$. This brings
\[
e\ringO Ge_P\otimes _{\ringO P} V = (\ringO Gi_1\otimes V) \oplus\cdots\oplus
(\ringO Gi_n\otimes V).
\]
Since the $\ringO Ge$-module $M$ is indecomposable, by the Krull-Schmidt theorem, it must be isomorphic to a direct summand of the $\ringO Ge$-module $\ringO Gi_l\otimes _{\ringO P} V$ for some $l\in\{1,\ldots,n\}$. If $\bar e_P\br_P(i_l)=0$, then the idempotent $i_l$ lies in $\Tr_Q^P((\ringO G)^Q)$ for some proper subgroup $Q$ of $P$, so $M$ is relatively $Q$-projective and cannot admit $P$ as a vertex. This contradiction proves~(i).

Let $\alpha$ be the point of the algebra $(\ringO Ge)^P$ that contains the idempotent $i=i_l$. Since $\br_P(i)\neq 0$, the pointed group $P_\alpha$ is local. Let $D_\beta$ be a defect pointed group of the block algebra $\ringO Ge$ such that $P_\alpha \leqslant D_\beta$ (see \cite[\S 18]{Thévenaz1995}). Then the $p$-group $D$ is a defect group of the block $e$ and there exists an idempotent $j\in \beta$ such that $\br_D(j)\neq 0$ and $ij=ji=i$. Then $\bar e_P\br_P(j)\neq 0$, and the $\ringO Ge$-module $\ringO Gi\otimes_{\ringO P} V$ is a direct summand of
\[
\ringO Gj\otimes_{\ringO P} V
\ \simeq \ 
\ringO Gj\otimes_{\ringO D} \Ind_P^D V.
\]
\vspace{-12mm}

\end{proof}

The following result deals with the behaviour of vertex subpairs and sources with respect to restriction. It is similar to \cite[Corollary 2.7]{Sibley1990}, with an additionnal statement about sources.

\begin{thm}
\label{thm:source triples and restriction}
Let $M$ be an $\ringO Ge$-module, and $H$ be a subgroup of $G$. Let $(Q,e_Q,V)$ be a source triple of an indecomposable direct summand $X$ of the restriction $\Res^G_H M$. Assume that the subgroup $H$ contains the centraliser $C_G(Q)$. Then $(Q,e_Q)$ is an $e$-subpair of the group $G$, and there exists a source triple $(R,e_R,W)$ of an indecomposable direct summand of the $\ringO Ge$-module $M$ such that $(Q,e_Q)\leqslant (R,e_R)$ and that $V$ is a direct summand of the $\ringO Q$-module $\Res^R_Q W$.
\end{thm}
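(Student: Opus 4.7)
First, by Krull-Schmidt I would reduce to the case where $M$ is indecomposable, since $X$ must be isomorphic to a direct summand of $\Res^G_H M_j$ for some indecomposable summand $M_j$ of $M$. I then pick a source triple $(R_0,e_{R_0},W_0)$ of this $M$; by Lemma~\ref{lem:vertex subpair}(i), the module $M$ is isomorphic to a direct summand of $\Ind^G_{R_0} W_0$. Restricting to $H$ and applying the Mackey formula,
\[
\Res^G_H\Ind^G_{R_0}W_0 \simeq \bigoplus_{t\in[H\backslash G/R_0]} \Ind^H_{H\cap {}^tR_0}\Res^{{}^tR_0}_{H\cap {}^tR_0}{}^tW_0,
\]
and since the indecomposable $X$ is a direct summand of the left-hand side it must sit inside a single term of the right-hand side. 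Replacing $(R_0,e_{R_0},W_0)$ by its $G$-conjugate along $t$, we may assume that $X$ is a direct summand of $\Ind^H_{H\cap R_0} Y$ for some indecomposable summand $Y$ of $\Res^{R_0}_{H\cap R_0}W_0$.

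The vertex $Q$ of $X$ must then be $H$-conjugate to a subgroup of $H\cap R_0$. Conjugating once more inside $H\subseteq G$, I obtain a new $G$-conjugate source triple $(R,e_R,W)$ of $M$ with $Q\leqslant R$. A second application of the Mackey formula to $\Res^H_Q X$ shows that only the identity double coset in $[Q\backslash H/(H\cap R)]$ can carry an indecomposable summand of vertex $Q$, so $V$ is a direct summand of $\Res^R_Q W$.

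It remains to verify that $(Q,e_Q)$ is actually an $e$-subpair of $G$ and that $(Q,e_Q)\leqslant(R,e_R)$. Here the hypothesis $C_G(Q)\leqslant H$ is decisive: it gives $\ringO C_H(Q)=\ringO C_G(Q)$, so $e_Q$ is a block of $\ringO C_G(Q)$, and the Brauer morphism $\br_Q$ computed in $H$ agrees on $(\ringO H)^Q$ with its counterpart in $G$. Consequently the underlying spaces of $\Br_Q(M)$ and $\Br_Q(\Res^G_H M)$ coincide, so the nonvanishing $\Br_{(Q,e_Q)}(X)\neq 0$, which holds at every vertex subpair, propagates to $\bar e_Q\Br_Q(M)\neq 0$; since $e$ acts as the identity on $M$, this forces $\bar e_Q\br_Q(e)\neq 0$, establishing that $(Q,e_Q)$ is an $e$-subpair of $G$. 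For the ordering itself, I would apply Lemma~\ref{lem:vertex subpair and Brauer}(i) to pick a primitive idempotent $j\in(\ringO G)^R$ with $\bar e_R\br_R(j)\neq 0$ and $M\mid\ringO Gj\otimes_{\ringO R}W$, decompose $j$ into a sum of primitives in $(\ringO G)^Q$, and identify via the Mackey component carrying $V\mid\Res^R_Q W$ the unique primitive summand whose point of $Q$ corresponds to $e_Q$; this would yield $(Q,e_Q)\leqslant(R,e_R)$ by Alperin-Broué uniqueness. I expect this last step to be the main obstacle, since it requires carefully tracking primitive idempotents through the various Mackey decompositions while exploiting $C_G(Q)\leqslant H$ to pass from the $H$-local structure of $X$ to the $G$-local structure of $M$.
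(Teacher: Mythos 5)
Your reduction to indecomposable $M$ and the first Mackey decomposition are fine and match the broad shape of the paper's argument, but the proposal has three genuine problems, the last of which is the heart of the theorem.

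First, the claim that ``only the identity double coset in $Q\backslash H/(H\cap R)$ can carry an indecomposable summand of vertex~$Q$'' is not correct: the Mackey formula gives summands induced from $Q\cap{}^t(H\cap R)$, and any $t$ with $Q\leqslant{}^t(H\cap R)$ can contribute a summand of vertex $Q$, not only $t=1$. Different such double cosets give potentially different blocks $e_Q$, and it is precisely the choice of the right one that carries all the content of the subpair inequality.

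Second, the argument that $(Q,e_Q)$ is an $e$-subpair rests on the assertion that $\Br_{(Q,e_Q)}(X)\neq 0$ whenever $(Q,e_Q)$ is a vertex subpair of $X$. That is false for general (non-$p$-permutation) modules: the Brauer quotient of a module can vanish at its vertex (e.g.\ $\Omega\ringO$ over $\ringO C_p$). What is nonzero at a vertex is $\Br_{\Delta Q}(\Endom_\ringO(X))$, and this is exactly what the paper's proof exploits. (In fact the paper does not prove that $(Q,e_Q)$ is an $e$-subpair directly; it follows once $(Q,e_Q)\leqslant(R,e_R)$ is established, since subpairs contained in an $e$-subpair are $e$-subpairs.)

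Third, and most importantly, you acknowledge that the crucial step --- passing from $Q\leqslant R$ to the \emph{subpair} inequality $(Q,e_Q)\leqslant(R,e_R)$ and getting $V\mid\Res^R_Q W$ for the same block --- is ``the main obstacle,'' and the proposal does not actually carry it out. This is where the theorem lives. The paper handles it as follows: after reducing to $H=QC_G(Q)$ and writing $X$ inside $L=e_Q\ringO Gi\otimes_{\ringO P}U$, it identifies a double coset $g$ with $Q\leqslant{}^gP$ and assumes for contradiction that $(Q,e_Q)\not\leqslant{}^g(P,e_P)$. Then \cite[Lemma 40.1]{Th�venaz1995} forces $\bar e_Q\br_Q({}^{hg}i)=0$ for all $h\in H$, so $X$ lands inside $J_QL_0$ where $J_Q$ is the kernel of $\br_Q$. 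Passing to endomorphism algebras gives $\Br_{\Delta Q}(\Endom_\ringO(X))=0$, contradicting the fact that $Q$ is a vertex of $X$. Without some argument of this kind --- one that genuinely uses the block idempotents and the kernel of the Brauer map, not just group-theoretic Mackey bookkeeping --- the inequality $(Q,e_Q)\leqslant(R,e_R)$ does not follow.
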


\begin{proof}
We may assume that the $\ringO Ge$-module $M$ is indecomposable. Let $(P,e_P,U)$ be a source triple of $M$. By Lemma \ref{lem:vertex subpair and Brauer}, there is a primitive idempotent $i\in(\ringO Ge)^P$ such that $\bar e_P\br_P(i)\neq 0$ and $X$ is (isomorphic to) an indecomposable direct summand of the $\ringO H$-module $\ringO Gi\otimes_{\ringO P} U$.
Since $C_G(Q)\leqslant H$, the block $e_Q$ of $\ringO C_H(Q)$ is also a block of the algebra $\ringO QC_G(Q)$. The $\ringO QC_G(Q)$-module $e_Q X$ admits an indecomposable direct summand $X'$ with vertex $Q$ and source $V$. Up to replacing $X$ by $X'$, we may now assume that $H=QC_G(Q)$, and that $X$ is an indecomposable direct summand of the $\ringO H$-module $L=e_Q\ringO Gi\otimes_{\ringO P}U$.

We consider the $\ringO H$-module $L_0 = \ringO G\otimes_{\ringO P} U$. The map $f:L_0\to L_0$ defined by $f(g\otimes u) = e_Qgi\otimes u$ is an idempotent of $\Endom_{\ringO H}(L_0)$ such that $f(L_0)=L$, so $L$ is a direct summand of $L_0$. Let $L_0=X_0\oplus\cdots\oplus X_n$ be a Krull-Schmidt decomposition of the $\ringO H$-module $L_0$ such that $X_0=X$, $L=X_0\oplus\cdots\oplus X_m$ and $\ker f=X_{m+1}\oplus\cdots\oplus X_n$ for some $m\in\{1,\ldots,n\}$.

We then consider the decomposition $L_0 = \bigoplus_{g\in \mathcal R} L_g$, where $\mathcal R$ is a set of representatives for the double class set $H\!\setminus\! G/P$, and $L_g=\ringO HgP\otimes_{\ringO P} U$ for any $g\in \mathcal R$.
This can be refined into a Krull-Schmidt decomposition $L_0=Y_0\oplus\cdots\oplus Y_n$ of the $\ringO H$-module $L_0$.
From the proof of the Krull-Schmidt theorem in \cite{Benson1991}, we know that there exists an $l\in\{1,\ldots,n\}$ such that the projection map from $Y_l$ onto $X_0$ along $X_1\oplus\cdots\oplus X_n$ is an isomorphism. We may assume $l=0$. Then $f(Y_0)$ is a complement of the direct summand $X_1\oplus\cdots\oplus X_m$ in the $\ringO H$-module $L$, so the $\ringO H$-modules $f(Y_0)$ and $X_0$ are isomorphic. We may assume $X_0=f(Y_0)$.

By construction, there exists a $g\in\mathcal R$ such that the $\ringO H$-module $Y_0$ is a direct summand of $L_g$. The isomorphism
\[
L_g \ \simeq \ 
\Ind_{H\cap\null^g\!P}^{H} \Res^{^g\!P}_{H\cap\null^g\!P} gU
\]
shows that the $\ringO H$-module $L_g$, hence also $Y_0$, is relatively $H\cap\null^g\!P$-projective. Since the vertex $Q$ of the $\ringO H$-module $Y_0\simeq X_0$ is normal in $H$, it follows that $Q\leqslant\null^g\!P$.
Assume for a moment that $(Q,e_Q)\not\leqslant\null^g(P,e_P)$.
For any $h\in H$, we have
\[
f(hgP\otimes_{\ringO P} U) 
\ =\ 
e_Q(hgP)i\otimes_{\ringO P} U
\ =\ 
e_Q(\,^{hg}i).(hgP)\otimes_{\ringO P} U. 
\]
The $e$-subpair $(Q,e_Q)$ is normalised by the group $H$, so we have $Q\leqslant\null^{hg}P$ but $(Q,e_Q)\not\leqslant\null^{hg}(P,e_P)$. By definition, $i$ is a primitive idempotent of $(\ringO Ge)^P$ such that $\bar e_P\br_P(i)\neq 0$. Thus $^{hg}i$ is a primitive idempotent in $(\ringO Ge)^Q$, and it follows from \cite[Lemma 40.1]{Thévenaz1995} that $\bar e_Q\br_Q(\null^{hg}i)=0$. Then we obtain
$
f(hgP\otimes_{\ringO P} U) \subseteq J_Q L_0,
$ 
where $J_Q\subseteq (\ringO Ge)^Q$ is the kernel of the Brauer map $\br_Q$. This implies
\[
X_0 \ =\ f(Y_0)
\ \subseteq\ 
f(L_g)
\ \subseteq\ 
J_Q\,L_0.
\] 
We know that $X_0$ is a direct summand of the $\ringO H$-module $L_0$. Consider the endomorphism algebra $A=\Endom_\ringO(L_0)$, and let $a\in A^H$ be an idempotent such that $X_0=aL_0$. Then there is an isomorphism $\Endom_\ringO (X_0)\simeq aAa$, which induces an isomorphism $\Br_{\Delta Q}(\Endom_\ringO (X_0)) \simeq \br_{\Delta Q}^{A} (aAa)$. The inclusion $X_0\subseteq J_QL_0$ yields $aAa\subseteq J_QA$, hence $\br_{\Delta Q}^A(aAa)=0$. We obtain $\Br_{\Delta Q}(\Endom_\ringO(X_0))=0$, a contradiction since $Q$ is a vertex of $X_0$. 

This contradiction proves that $(Q,e_Q)\leqslant \null^g(P,e_P)$. Then the $\ringO H$-module $X=X_0$ is isomorphic to a direct summand of the induced module 
$
L_g
\ \simeq\ 
\Ind_{H\cap\null^g\!P}^H \Res^{^g\!P}_{H\cap\null^g\!P} \ gU.
$
As a consequence, the source $V$ of $X$ is isomorphic to a direct summand of the $\ringO Q$-module $\Res^{^{hg}\!P}_Q hgU$ for some $h\in H$. Then we set $(R,e_R,W) = \null^{hg}(P,e_P,U)$, and the proof is complete.
\end{proof}

The application of Theorem \ref{thm:source triples and restriction} that we have in mind is as follows. Let $M$ be an indecomposable $\ringO Ge$-module, and $(P,e_P)$ be an $e$-subpair of the group $G$. Let $H$ be a subgroup of $G$ such that $PC_G(P)\leqslant H\leqslant N_G(P,e_P)$. Let $(Q,e_Q,V)$ be a source triple of an indecomposable direct summand of the $\ringO H$-module $e_P M$. If the vertex $Q$ contains the $p$-group $P$, then we have $C_G(Q)\leqslant C_G(P)\leqslant H$. Thus the theorem applies: there exists a source triple $(R,e_R,W)$ of $M$ such that $(Q,e_Q)\leqslant (R,e_R)$ and that $V$ is a direct summand of the $\ringO Q$-module $\Res^R_Q W$. 
Notice that this statement is unlikely to be true without the assumption $P\leqslant Q$.

\section{Brauer-friendly modules}
\label{sec:BFModules}

Let $P$ be a $p$-group. An endopermutation $\ringO P$-module, as defined in \cite{Dade1978}, is an $\ringO P$-module $V$ such that the endomorphism algebra $\Endom_\ringO(V)$ is $\ringO$-free and admits a $\Delta P$-stable basis. Two endopermutation $\ringO P$-modules $V$ and $W$ are said to be compatible if the direct sum $V\oplus W$ is an endopermutation $\ringO P$-module.

In this section, we fix a finite group $G$ and a block $e$ of the algebra $\ringO G$. We aim at studying certain $\ringO Ge$-modules with endopermutation sources. The following definition is derived from \cite{LinckelmannMazza}; notice however that we work over the local ring $\ringO$ rather than its residue field $k$, and that we replace the language of fusion systems with that of Brauer categories.

\begin{defn}
\label{defn:BFsource triple}
Let $(P,e_P)$ be an $e$-subpair of the group $G$, and let $V$ be an endopermutation $\ringO P$-module. We say that $V$ is fusion-stable in the group $G$ with respect to the subpair $(P,e_P)$ if the endopermutation $\ringO Q$-modules $\Res_{\phi_1} V$ and $\Res_{\phi_2} V$ are compatible for any $e$-subpair $(Q,e_Q)$ and any two arrows $\phi_1,\phi_2:(Q,e_Q)\to (P,e_P)$ in the Brauer category $\catbr(G,e)$.
\end{defn}

In practice, let $(P,e_P)$ is an $e$-subpair and $V$ an endopermutation $\ringO P$-module. If we wish to prove that $V$ is fusion-stable with respect to the subpair $(P,e_P)$, it is enough to check that the endopermutation $\ringO Q$-modules $\Res^P_QV$ and $\Res^{^g\!P}_Q gV$ are compatible, for any $e$-subpair $(Q,e_Q)$ contained in $(P,e_P)$ and any element $g\in G$ such that $(Q,e_Q)\leqslant \null^g(P,e_P)$.

To make it shorter, we say that a triple $(P,e_P,V)$ is a fusion-stable endopermutation source triple in the group $G$ if $V$ is an endopermutation $\ringO P$-module that is indecom\-po\-sable, capped (\emph{i.e.}, with vertex $P$), and fusion-stable with respect to the subpair $(P,e_P)$ of the group $G$.

\begin{defn}
We say that two fusion-stable endopermutation source triples $(P_1,e_1,V_1)$ and $(P_2,e_2,V_2)$ are compatible if the endopermutation $\ringO Q$-modules $\Res_{\phi_1} V_1$ and $\Res_{\phi_2} V_2$ are compatible for any $e$-subpair $(Q,e_Q)$ and any two arrows $\phi_1:(Q,e_Q)\to (P_1,e_1)$, $\phi_2:(Q,e_Q)\to (P_2,e_2)$ in the Brauer category $\catbr(G,e)$. 
\end{defn}

\begin{lem}
\label{lem:compatibility conjugacy induction}
Let $(P_1,e_1,V_1)$ and $(P_2,e_2,V_2)$ be two source triples in the group $G$ with respect to the block $e$.
\begin{enumerate}[(i)]
\item If $(P_1,e_1,V_1)$ is a fusion-stable endopermutation source triple, then any $G$-conjugate of $(P_1,e_1,V_1)$ is a fusion-stable endopermutation source triple. If moreover $(P_2,e_2,V_2)$ is a fusion-stable endopermutation source triple that is compatible with $(P_1,e_1,V_1)$, then it is also compatible with any $G$-conjugate of $(P_1,e_1,V_1)$.
\item
For $i=1,2$, let $(Q_i,f_i)$ be a subpair of $(P_i,e_i)$, and let $W_i$ be a capped indecomposable direct summand of the restriction $\Res^{P_i}_{Q_i} V_i$. If $(P_1,e_1,V_1)$ and $(P_2,e_2,V_2)$ are compatible fusion-stable endopermutation source triples, then $(Q_1,f_1,W_1)$ and $(Q_2,f_2,W_2)$ are compatible fusion-stable endopermutation source triples.
\item
Let $(R,e_R)$ be an $e$-subpair and $\psi_1:(P_1,e_1)\to(R,e_R)$, $\psi_2:(P_2,e_2)\to (R,e_R)$ be arrows in the Brauer category $\catbr(G,e)$. Then $(P_1,e_1,V_1)$ and $(P_2,e_2,V_2)$ are compatible fusion-stable endopermutation source triples if, and only if, the direct sum $\Ind_{\psi_1} V_1\,\oplus\, \Ind_{\psi_2} V_2$ is an endopermutation $\ringO R$-module that is fusion-stable in the group $G$ with respect to the subpair $(R,e_R)$.
\end{enumerate}
\end{lem}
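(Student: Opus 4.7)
My plan leverages three tools, one per part: $G$-equivariance of the Brauer category for~(i), composition of arrows with the subpair inclusion for~(ii), and the Mackey formula for~(iii); throughout I use the standard fact that direct summands of endopermutation modules are endopermutation. For~(i), conjugation by $g\in G$ defines a self-equivalence of $\catbr(G,e)$ that sends an arrow $\phi:(Q,e_Q)\to(P,e_P)$ to $\null^g\phi:\null^g(Q,e_Q)\to\null^g(P,e_P)$, and the $\ringO(\null^gQ)$-module $\Res_{\null^g\phi}(\null^gV)$ is canonically isomorphic to $\null^g(\Res_\phi V)$. Since $V\mapsto\null^gV$ is a bijection on isomorphism classes that preserves direct sums and the endopermutation property, both fusion-stability and pairwise compatibility transfer across the $G$-action.

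For~(ii), fix an $e$-subpair $(T,e_T)$ and arrows $\phi_i:(T,e_T)\to(Q_i,f_i)$. Denoting by $\iota_i:(Q_i,f_i)\hookrightarrow(P_i,e_i)$ the subpair inclusions, the composites $\iota_i\circ\phi_i:(T,e_T)\to(P_i,e_i)$ are arrows in $\catbr(G,e)$ to which the compatibility hypothesis on $V_1,V_2$ applies, so $\Res_{\iota_1\phi_1}V_1\oplus\Res_{\iota_2\phi_2}V_2$ is endopermutation. This sum contains $\Res_{\phi_1}W_1\oplus\Res_{\phi_2}W_2$ as a direct summand, which therefore inherits the endopermutation property, establishing compatibility of $(Q_1,f_1,W_1)$ and $(Q_2,f_2,W_2)$. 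The same argument, run with $V_1=V_2=V_i$ and the fusion-stability hypothesis on $(P_i,e_i,V_i)$, gives the individual fusion-stability of each $(Q_i,f_i,W_i)$; and $W_i$ is itself endopermutation as a summand of $\Res^{P_i}_{Q_i}V_i$ and capped by assumption.

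For~(iii), the backward implication mirrors~(ii): for any arrows $\phi_i:(Q,e_Q)\to(P_i,e_i)$, the composites $\psi_i\circ\phi_i:(Q,e_Q)\to(R,e_R)$ are arrows in $\catbr(G,e)$, and the identity-double-coset term of the Mackey formula identifies $\Res_{\phi_i}V_i$ as a direct summand of $\Res_{\psi_i\phi_i}\Ind_{\psi_i}V_i$, so the assumed endopermutation fusion-stability of $\Ind_{\psi_1}V_1\oplus\Ind_{\psi_2}V_2$ restricts to compatibility of $V_1,V_2$. The forward implication is the heart of the lemma: given compatibility, I check the definition of fusion-stability directly for $N=\Ind_{\psi_1}V_1\oplus\Ind_{\psi_2}V_2$ by fixing an $e$-subpair $(Q,e_Q)$ and arrows $\mu_1,\mu_2:(Q,e_Q)\to(R,e_R)$. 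The Mackey formula decomposes each $\Res_{\mu_j}\Ind_{\psi_i}V_i$ as a direct sum indexed by double cosets, and each resulting summand can be analysed, via parts~(i) and~(ii) applied to appropriately conjugated triples, in terms of restrictions $\Res_{\phi}V_i$ along suitable arrows $\phi:(Q',e_{Q'})\to(P_i,e_i)$ of $\catbr(G,e)$. The pairwise compatibility of $V_1,V_2$ across all such arrows then forces $\Res_{\mu_1}N\oplus\Res_{\mu_2}N$ to be endopermutation; specialising to $\mu_1=\mu_2$ also yields that $N$ itself is endopermutation, completing the forward direction.

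The main obstacle is precisely this Mackey-based analysis in the forward direction of~(iii): one must translate each double coset representative into a well-defined arrow of the Brauer category (controlling the local block idempotents and the $e$-subpair condition, where Lemma~\ref{lem:vertex subpair and Brauer}-type arguments come in), and verify that every pair of pieces extracted from the decomposition is covered by the pairwise compatibility hypothesis on $V_1$ and $V_2$.
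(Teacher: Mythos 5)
Your overall architecture matches the paper's: parts (i) and (ii) and the backward direction of (iii) are elementary, and the forward direction of (iii) hinges on a Mackey decomposition of $\Homom_\ringO(\Ind_{\psi_1}V_1,\Ind_{\psi_2}V_2)$ (and of the two endomorphism algebras, plus the further restrictions $\Res^R_Q$ and $\Res^{{}^g\!R}_Q g(-)$ needed to check fusion-stability). You correctly identify where the difficulty lies: converting each double coset representative $x\in P_2\!\setminus\! R/P_1$ into an arrow of $\catbr(G,e)$ so that the compatibility hypothesis can be invoked.

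However, you leave that step as an announced obstacle rather than resolving it, and the resolution you gesture at — ``Lemma~\ref{lem:vertex subpair and Brauer}-type arguments'' — is not what makes it work. That lemma relates vertex subpairs to the Brauer morphism and plays no role here. What is actually needed is the basic Alperin--Brou\'e uniqueness property of subpair containment: for a fixed maximal $e$-subpair $(R,e_R)$ and any subgroup $Q\leqslant R$, there is a \emph{unique} block $e_Q$ with $(Q,e_Q)\leqslant(R,e_R)$, and containment is transitive. Concretely, for $Q_x = P_2\cap{}^x\!P_1$ with $x\in R$, one takes $e_{Q_x}$ to be the unique block with $(Q_x,e_{Q_x})\leqslant(R,e_R)$; then $Q_x\leqslant P_2\leqslant R$ and $(P_2,e_2)\leqslant(R,e_R)$ force $(Q_x,e_{Q_x})\leqslant(P_2,e_2)$, while $x\in R$ stabilises $(R,e_R)$, so ${}^{x^{-1}}(Q_x,e_{Q_x})\leqslant(R,e_R)$ and hence ${}^{x^{-1}}(Q_x,e_{Q_x})\leqslant(P_1,e_1)$. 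This makes $\phi_1^x$ and $\phi_2^x$ arrows of $\catbr(G,e)$ and lets the compatibility hypothesis apply term by term. Until you carry out this block-tracking explicitly, the proof is incomplete; and invoking parts (i)/(ii) in the middle of the Mackey analysis, as you propose, is an unnecessary detour — the containment argument above is self-contained and is exactly what the paper does, following Dade's Lemma~6.8.
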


\begin{proof}
The statements in (i) and (ii) are straightforward consequences of the definition, as well as the ``if'' part of (iii). The ``only if'' part of (iii) is more subtle.

With the notations of (iii), we suppose that $(P_1,e_1,V_1)$ and $(P_2,e_2,V_2)$ are compatible fusion-stable endopermutation source triples. Up to replacing these triples by conjugates, we may suppose that the $e$-subpairs $(P_1,e_1)$ and $(P_2,e_2)$ are contained in $(R,e_R)$, and that the arrows $\psi_1$ and $\psi_2$ are the inclusion maps. We follow the lines of the proof of \cite[Lemma 6.8]{Dade1978}. The Mackey formula gives an isomorphism of $\ringO\Delta R$-modules
\begin{align*}
\Homom_\ringO(\Ind_{P_1}^R V_1,\Ind_{P_2}^R V_2)
& \ \simeq \ 
\Res^{R\times R}_{\Delta R} \Ind_{P_2\times P_1}^{R\times R}
\Homom_{\ringO}(V_1,V_2)
\\
& \ \simeq \ 
\bigoplus_{x\in P_2\!\setminus\! R/P_1}
\Ind_{\Delta Q_x}^{\Delta R}
\Homom_{\ringO}(\Res_{\phi_1^{x}}V_1,\Res_{\phi_2^{x}}V_2),
\end{align*}
where $Q_x=P_2\cap\null^x\!P_1$, the morphism $\phi_1^{x}:Q_x\to P_1$ sends  $y\in Q_x$ to $^{x^{-1}\!}y\in P_1$, and $\phi_2^{x}:Q_x\to P_2$ is the inclusion map. Let $e_{Q_x}$ be the block of $\ringO C_G(Q_x)$ such that $(Q_x,e_{Q_x})\leqslant (R,e_R)$. Then the inclusions $(P_2,e_2)\leqslant (R,e_R)$ and $Q_x\leqslant P_2$ imply $(Q_x,e_{Q_x}) \leqslant (P_2,e_2)$. Since $x$ lies in $R$, we also have $^{x^{-1}}(Q_x,e_{Q_x})\leqslant (R,e_R)$ so the inclusions $(P_1,e_1)\leqslant (R,e_R)$ and $^{x^{-1}}Q_x\leqslant P_2$ imply $^{x^{-1}}(Q_x,e_{Q_x}) \leqslant (P_2,e_2)$. As a consequence, $\phi_1^{x}:(Q_x,e_{Q_x})\to (P_1,e_1)$ and $\phi_2^{x}:(Q_x,e_{Q_x})\to (P_2,e_2)$ are arrows in the Brauer category $\catbr(G,e)$.
Since the triples $(P_1,e_1,V_1)$ and $(P_2,e_2,V_2)$ are compatible, we deduce that $\Homom_\ringO(\Ind_{P_1}^R V_1,\Ind_{P_2}^R V_2)$ is a permutation $\ringO\Delta R$-module.

Similarly, the endomorphism algebras $\Endom_\ringO(\Ind_{P_1}^R V_1)$ and $\Endom_\ringO(\Ind_{P_2}^R V_2)$ are permutation $\ringO\Delta R$-module. Thus $\Ind_{P_1}^R V_1$ and $\Ind_{P_2}^R V_2$ are compatible endopermutation $\ringO R$-modules.

More generally, let $(Q,e_Q)$ be an $e$-subpair that is contained in $(R,e_R)$ and let $g\in G$ be an element such that $(Q,e_Q)\leqslant\null^g (R,e_R)$. A similar use of the Mackey formula proves that $\Homom_\ringO(\Res^R_Q \Ind_{P_1}^R V_1,\Res^{^g\!R}_Q g \Ind_{P_2}^R V_2)$ is a permutation $\ringO \Delta Q$-module. It follows that the restrictions 
\[
\Res^R_Q\Ind_{P_1}^R V_1 \ \ ;\ \ 
\Res^R_Q\Ind_{P_2}^R V_2  \ \ ;\ \ 
\Res^{^g\!R}_Q g\Ind_{P_1}^R V_1  \ \ ;\ \  
\Res^{^g\!R}_Q g\Ind_{P_2}^R V_2
\]
are compatible endopermutation $\ringO Q$-modules. So the direct sum $\Ind^R_{P_1} V_1\oplus \Ind^R_{P_2} V_2$ is an endopermutation $\ringO R$-module that is fusion-stable with respect to the subpair $(R,e_R)$.
\end{proof}

We know from Lemma \ref{lem:compatibility conjugacy induction} (i) that the notion of  compatible fusion-stable endo\-permutation source triples is invariant by conjugation in the group $G$. The source triples of an indecomposable $\ringO Ge$-module are defined up to conjugation, so the following definition is unambiguous.

\begin{defn}
We say that an $\ringO Ge$-module $M$ is Brauer-friendly if it is a direct sum of indecomposable $\ringO Ge$-modules with compatible fusion-stable endopermutation source triples. We say that two Brauer-friendly $\ringO Ge$-modules $L$ and $M$ are compatible if the direct sum $L\oplus M$ is a Brauer-friendly $\ringO Ge$-module.
\end{defn}

Let us give a few examples.
It is clear that the triples $(P,e_P,\ringO)$, where $(P,e_P)$ runs into the set of $e$-subpairs of the group $G$ and $\ringO$ is the trivial $\ringO P$-module, are compatible fusion-stable endo\-permutation source triples. Thus the $p$-permutation $\ringO Ge$-modules are Brauer-friendly and pairwise compatible.

Next, let $M$ be an indecomposable endo-$p$-permutation $\ringO Ge$-module, and $(P,e_P,V)$ be a source triple of $M$. By \cite[Theorem 1.5]{Urfer2007}, the source $V$ is an endopermutation $\ringO P$-module that is fusion-stable in the Frobenius category $\catfr(G)$ with respect to the $p$-subgroup $P$. \emph{A fortiori}, $V$ is fusion-stable in the Brauer-category $\catbr(G,e)$ with respect to the $e$-subpair $(P,e_P)$, so $M$ is a Brauer-friendly $\ringO Ge$-module. If $M$ and $N$ are compatible endo-$p$-permutation $\ringO Ge$-modules (\emph{i.e.}, the direct sum $M\oplus N$ is an endo-$p$-permutation module), then they are compatible as Brauer-friendly modules.

Conversely, let $M$ be a Brauer-friendly $\ringO Ge$-module. If $e=e_0$ is the principal block of the group $G$, then an $e$-subpair $(P,e_P)$ is uniquely determined by the $p$-subgroup $P$ of $G$. Thus it follows from \cite[Theorem 1.5]{Urfer2007} that $M$ is an endo-$p$-permutation $\ringO Ge$-module. But the fusion system of an arbitrary block is usually ``finer'' than the fusion system of the principal block, so $M$ may not be an endo-$p$-permutation module in general.

The following lemma, of which a proof is straightforward from Lemma \ref{lem:compatibility conjugacy induction} and the proof of Theo\-rem \ref{thm:source triples and restriction}, gives a more general example of Brauer-friendly module.

\begin{lem}
\label{lem:BFexample}
Let $(P,e_P)$ be an $e$-subpair, and let $V$ be an endopermutation $\ringO P$-module that is fusion-stable with respect to the subpair $(P,e_P)$. Let $i\in (\ringO Ge)^P$ be an idempotent such that $\bar e_P\br_P(i)\neq 0$. Assume that, for any subgroup $Q$ of $P$, the idempotent $\br_Q(i)$ lies in a single block of the algebra $kC_G(Q)$. Then the $\ringO Ge$-module $L=\ringO Gi\otimes_{\ringO P} V$ is Brauer-friendly.
\end{lem}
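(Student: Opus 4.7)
The strategy is to decompose $L$ into indecomposable summands and exhibit, for each, a fusion-stable endopermutation source triple of the form $(Q,e_Q,W)$ with $(Q,e_Q)\leqslant(P,e_P)$ in the Brauer category $\catbr(G,e)$, and $W$ a capped indecomposable direct summand of $\Res^P_Q V$. Once this is done, pairwise compatibility of these source triples---and in particular the fact that each one is on its own fusion-stable---will fall out of Lemma~\ref{lem:compatibility conjugacy induction}~(ii) applied to the triple $(P,e_P,V)$ with itself, because a fusion-stable endopermutation source triple is, by definition, compatible with itself.

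First I would decompose $i=i_1+\dots+i_n$ into mutually orthogonal primitive idempotents in $(\ringO Ge)^P$, giving $L=\bigoplus_k \ringO Gi_k\otimes_{\ringO P} V$. For each $Q\leqslant P$, the hypothesis provides a block $\bar e_Q$ of $kC_G(Q)$ that contains $\br_Q(i)$; since $\bar e_Q$ is central, the images $\br_Q(i_k)(1-\bar e_Q)$ are pairwise orthogonal idempotents whose sum is $0$, so each equals $0$ and every $\br_Q(i_k)$ also lies in $\bar e_Q$. By the Alperin-Brou\'e uniqueness of subpair inclusion, I may then define $(Q,e_Q)$ to be the unique $e$-subpair contained in $(P,e_P)$ with first component $Q$; this ensures that the candidate subpair is automatically below $(P,e_P)$.

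Next, since $i_k$ is $P$-fixed, right multiplication by $i_k$ is a right $\ringO P$-linear idempotent on the free right $\ringO P$-module $\ringO G$, so $\ringO Gi_k$ is a direct summand of $\ringO G$ as a right $\ringO P$-module, and each $\ringO Gi_k\otimes_{\ringO P}V$ is a direct summand of $\Ind_P^G V$. Any indecomposable summand $M$ of $L$ is therefore $P$-projective; Green's theorem provides a vertex $Q$ of $M$ that can be chosen inside $P$, together with a source $W$ which is a capped direct summand of $\Res^P_Q V$. To identify the vertex subpair of $M$ as precisely $(Q,e_Q)$, I would adapt the argument in the proof of Theorem~\ref{thm:source triples and restriction}: refine $i_k$ in $(\ringO Ge)^Q$ into primitive idempotents and use Lemma~\ref{lem:vertex subpair and Brauer}~(i) to produce a primitive $j\in(\ringO Ge)^Q$ with $\bar e_Q\br_Q(j)\neq 0$ such that $M$ is a direct summand of $\ringO Gj\otimes_{\ringO Q}W$. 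The observation from the previous paragraph that $\br_Q(i_k)$, and hence each of its primitive constituents in $(\ringO Ge)^Q$, sits in the block $\bar e_Q$ is precisely what makes such a $j$ available.

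Finally, applying Lemma~\ref{lem:compatibility conjugacy induction}~(ii) with $(P_1,e_1,V_1)=(P_2,e_2,V_2)=(P,e_P,V)$ yields that every source triple $(Q,e_Q,W)$ produced above is a fusion-stable endopermutation source triple and that any two of them are compatible; hence $L$ is Brauer-friendly. The main obstacle is the vertex-subpair identification of the preceding paragraph: the single-block hypothesis on $\br_Q(i)$ is what forces every vertex subpair appearing in a summand of $L$ to sit inside the common umbrella $(P,e_P)$, without which Lemma~\ref{lem:compatibility conjugacy induction}~(ii) could not be applied uniformly to all summands at once.
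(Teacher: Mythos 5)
Your overall strategy---reduce to indecomposable summands of $L$, pin down each vertex subpair as sitting below $(P,e_P)$ via the single-block hypothesis, then get compatibility from the fusion-stability of $V$---is the one the paper intends, but the middle step does not work as you have written it. You propose to refine $i_k$ into primitive idempotents of $(\ringO Ge)^Q$ and conclude that an indecomposable summand $M$ is a summand of $\ringO Gj\otimes_{\ringO Q}W$ for one such $j$. The trouble is that $L$ is built from the tensor product over $\ringO P$, not $\ringO Q$; a decomposition $i_k=\sum_r j_r$ in $(\ringO Ge)^Q$ does not induce a direct-sum decomposition of $\ringO Gi_k\otimes_{\ringO P}V$ along the $j_r$, because the right $\ringO Q$-decomposition of $\ringO Gi_k$ is not compatible with the $\ringO P$-tensor. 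What you actually need to import from the proof of Theorem~\ref{thm:source triples and restriction} is the Brauer-quotient contradiction: if $(Q,f)$ is the vertex subpair of $M$ with $f$ different from the block $e_Q$ with $(Q,e_Q)\leqslant(P,e_P)$, the single-block hypothesis gives $(1-e_Q)i\in J_Q:=\ker\br_Q$, hence $fi=f(1-e_Q)i\in J_Q$; combining this with a $Q\backslash G/P$-double-coset decomposition of $\ringO G\otimes_{\ringO P}V$ and the Lemma 40.1 argument cited in that proof, one shows the relevant $\ringO Q$-summand of $fL$ lies in $J_Q\cdot(\ringO G\otimes_{\ringO P}V)$, forcing $\Br_{\Delta Q}(\Endom_\ringO(M))=0$, contradicting that $Q$ is a vertex of $M$. (Note also that you should check the hypothesis's ``single block'' actually coincides with the subpair-inclusion block $e_Q$; this does hold, by picking a constituent $i_k$ with $\bar e_P\br_P(i_k)\neq 0$ and using the pointed-group inclusion $Q\leqslant P$, but it requires a sentence.)

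A smaller point: invoking Lemma~\ref{lem:compatibility conjugacy induction}~(ii) with $V_1=V_2=V$ is a type error, since that lemma requires each $V_i$ to be a \emph{fusion-stable endopermutation source triple}, i.e., indecomposable and capped, which Lemma~\ref{lem:BFexample} does not assume of $V$. The repair is immediate from Definition~\ref{defn:BFsource triple}: given source triples $(Q,e_Q,W)$ and $(Q',e_{Q'},W')$ of two summands, with $W\mid\Res^P_QV$ and $W'\mid\Res^P_{Q'}V$ and both subpairs below $(P,e_P)$, any arrows $\phi:(S,e_S)\to(Q,e_Q)$ and $\phi':(S,e_S)\to(Q',e_{Q'})$ compose with the inclusions to give arrows into $(P,e_P)$, fusion-stability of $V$ makes $\Res_{\iota\phi}V$ and $\Res_{\iota'\phi'}V$ compatible, and $\Res_\phi W$, $\Res_{\phi'}W'$ are summands of these, hence compatible.
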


Notice that the compatibility of endopermutation modules is preserved by the reduction from the local ring $\ringO$ to the residue field $k$. For any Brauer-friendly $\ringO Ge$-module $M$, it follows that the reduction $k\otimes_\ringO M$ is a Brauer-friendly $kGe$-module. The notion is also partially compatible with the restriction to a local subgroup, as appears in the following lemma.
 
\begin{lem}
\label{lem:Brauer-friendly restriction}
Let $M$ be a Brauer-friendly $\ringO Ge$-module. Let $(P,e_P)$ be an $e$-subpair of the group $G$, and $H$ be a subgroup of $G$ such that $PC_G(P)\leqslant H\leqslant N_G(P,e_P)$. 
\begin{enumerate}[(i)]
\item The $\ringO He_P$-module $e_PM$ admits the decomposition $e_P M = L \oplus L'$, where $L$ is a Brauer-friendly $\ringO He_P$-module and $L'$ is a direct sum of indecomposable $\ringO He_P$-modules with vertices that do not contain the $p$-subgroup $P$. 
\item The restriction $\Res^H_P L$ is an endopermutation $\ringO P$-module.
\end{enumerate}
\end{lem}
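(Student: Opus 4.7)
The plan is to reduce to the case where $M$ is indecomposable, and then to use the Krull--Schmidt theorem to decompose $e_P M = L \oplus L'$ by grouping the indecomposable $\ringO H$-summands according to whether their vertex contains $P$ or not. The component $L'$ requires no further analysis, so the problem reduces to showing that $L$ is Brauer-friendly and that $\Res^H_P L$ is an endopermutation $\ringO P$-module.

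For part~(i), I would fix a source triple $(R, e_R, W)$ of $M$ and examine each indecomposable summand $X$ of $L$, with source triple $(Q, e_Q, V)$ where $P \leq Q$. The hypothesis $C_G(Q) \leq C_G(P) \leq H$ makes Theorem~\ref{thm:source triples and restriction} applicable, and it produces a $G$-conjugate $(R', e_{R'}, W')$ of $(R, e_R, W)$ above $(Q, e_Q)$ with $V$ a direct summand of $\Res^{R'}_Q W'$. Applying Lemma~\ref{lem:compatibility conjugacy induction}(i) and~(ii) to $(R, e_R, W)$ and its $G$-conjugates shows that $(Q, e_Q, V)$ is a fusion-stable endopermutation source triple in $G$, and that the source triples of distinct summands of $L$ are pairwise compatible in~$G$. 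Since, under the hypothesis $PC_G(P) \leq H \leq N_G(P, e_P)$, the arrows of $\catbr(H, e_P)$ between $e_P$-subpairs of $H$ identify with a subset of the arrows of $\catbr(G, e)$ between the corresponding $e$-subpairs of $G$ (all of which then contain $(P, e_P)$), compatibility in $G$ descends to compatibility in $H$ with respect to $e_P$, and $L$ is Brauer-friendly.

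For part~(ii), the same hypothesis makes $P$ normal in $H$, so that $P \leq {}^h Q$ for every $h \in H$ and every vertex $Q \geq P$ appearing above. Mackey's formula applied to $X \mid \Ind^H_Q V$ therefore reduces $\Res^H_P X$ to a direct sum of restrictions $\Res_{\phi_h} V$ along arrows $\phi_h : (P, e_P) \to (Q, e_Q)$ in $\catbr(G, e)$, namely conjugation by $h^{-1}$ followed by inclusion. Fusion-stability of $V$ with respect to $(Q, e_Q)$ makes each such piece an endopermutation $\ringO P$-module, and the pairwise compatibility of the source triples of the summands of $L$ established in part~(i) guarantees that the various pieces --- across all $h$ and across different summands of $L$ --- combine into a single endopermutation $\ringO P$-module, namely $\Res^H_P L$.

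The step I expect to be most delicate is the translation between the Brauer categories of $G$ and $H$: it is what allows the fusion-stability and compatibility established in the setting of $(G, e)$ to be used in the setting of $(H, e_P)$, and it uses essentially both halves of the hypothesis $PC_G(P) \leq H \leq N_G(P, e_P)$. Once this identification is in place, the proof is essentially an exercise in combining Theorem~\ref{thm:source triples and restriction} with Lemma~\ref{lem:compatibility conjugacy induction} and the Mackey formula.
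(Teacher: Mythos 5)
Your proposal follows essentially the same strategy as the paper's proof: decompose $e_PM$ by Krull--Schmidt according to whether vertices contain $P$; for (i), apply Theorem~\ref{thm:source triples and restriction} (using $P\leqslant Q$ to get $C_G(Q)\leqslant H$) together with Lemma~\ref{lem:compatibility conjugacy induction} to obtain compatible fusion-stable source triples; and for (ii), use $(P,e_P)\leqslant\null^h(Q,e_Q)$ for $h\in H$ and the Mackey formula. The paper's own proof is terser and does not explicitly spell out the translation between $\catbr(G,e)$ and $\catbr(H,e_P)$ that you rightly flag as the delicate point, but the underlying argument is the same.
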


\begin{proof}
The $\ringO H$-module $e_PM$ certainly admits the decomposition $e_P M = L \oplus L'$, where $L$ is a direct sum of indecomposable $\ringO H$-modules with vertices that contain $P$ and $L'$ is a direct sum of indecomposable $\ringO H$-modules with vertices that do not contain $P$. Let $X$, $X'$ be indecomposable direct summands of the $\ringO He_P$-module $L$, and let $(Q,f,W)$, $(Q',f',W')$ be respective source triples of $X$, $X'$. Since the $\ringO Ge$-module $M$ is Brauer-friendly, it follows from Theorem \ref{thm:source triples and restriction} that $(Q,f,W)$ and $(Q',f',W')$ are compatible fusion-stable source triples. Thus the $\ringO He_P$-module $L$ is Brauer-friendly .

Moreover, we have $(P,e_P)\leqslant \null^h(Q,f)$ and $(P,e_P)\leqslant \null^{h'}(Q',f')$ for any two elements $h,h'\in H$. So the restrictions $\Res^{^h\!Q}_P hW$ and $\Res^{^{h'}\!Q'}_P h'W'$ are compatible endopermutation $\ringO P$-modules. Then it follows from the Mackey formula that the restrictions $\Res^{H}_P\Ind_Q^{H} W$ and $\Res^{H}_P\Ind_{Q'}^{H} W'$ are compatible endopermutation $\ringO P$-modules. Since the restriction $\Res^{H}_P L$ is a direct sum of direct summands of modules of this kind, it is an endopermutation $\ringO P$-module.
\end{proof}

We conclude this section with a lemma that connects our notion of Brauer-friendly modules with Linckelmann's notion of modules with fusion-stable endopermutation sources over a source algebra, which appears in \cite[\S 3]{LinckelmannUnpublished}. It should be mentioned here that it was Linckelmann who initially gave us the idea to study such modules.

\begin{lem}
\label{lem:BF and source algebra}
Let $G$ be a finite group and $e$ be a block of the group $G$. Let $(D,e_D)$ be a maximal $e$-subpair and $i$ be a primitive idempotent of the algebra $(\ringO Ge)^D$ such that $\bar e_D\br_D(i)\neq 0$, \emph{i.e.}, a source idempotent. Let $A=i\ringO G i$ be the corresponding source algebra. An $\ringO Ge$-module $M$ is Brauer-friendly if, and only if, the $A$-module $iM$ is an endopermutation $\ringO D$-module.
\end{lem}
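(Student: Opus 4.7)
The plan is to exploit the Morita equivalence $\catmod{\ringO Ge}\simeq\catmod{A}$ given by the $(A,\ringO Ge)$-bimodule $i\ringO G$, together with a Mackey-style analysis of the $\ringO D$-module structure on $iM$ (which is well-defined because $i\in(\ringO Ge)^D$, so the action of $D\subseteq \ringO G$ preserves $iM\subseteq M$). Since both conditions in the statement are phrased in terms of compatibility of pairs of indecomposable summands, I first reduce to the case where $M$ has finitely many indecomposable summands $N_1,\ldots,N_n$, so that the goal becomes showing that all the $\ringO\Delta D$-modules $\Homom_\ringO(iN_\ell,iN_{\ell'})$ are $\ringO$-free permutation modules if and only if the source triples of the $N_\ell$ are pairwise compatible fusion-stable endopermutation source triples.

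The key technical step is to describe $iN$ as an $\ringO D$-module when $N$ is indecomposable with source triple $(P,e_P,V)$. Up to $G$-conjugation I may assume $(P,e_P)\leqslant(D,e_D)$. By Lemma \ref{lem:vertex subpair and Brauer}~(i), $N$ is a direct summand of $\ringO Gj\otimes_{\ringO P}V$, where $j$ is a primitive idempotent of $(\ringO Ge)^P$ with $\bar e_P\br_P(j)\neq 0$ that I may furthermore choose inside a refinement of $i$ in $(\ringO Ge)^P$ (so that $ij=ji=j$). A Mackey decomposition of the $(\ringO D,\ringO P)$-bimodule $i\ringO Gj$, combined with the vanishing argument \cite[Lemma 40.1]{Th�venaz1995} already used in the proof of Theorem \ref{thm:source triples and restriction}, shows that double cosets $DgP$ for which $(P,e_P)\not\leqslant\null^{g^{-1}}(D,e_D)$ contribute zero; hence $iN$ is an $\ringO D$-direct summand of $\Ind_P^D V$.

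For the forward direction, given two indecomposable summands $N_\ell,N_{\ell'}$ with source triples $(P_\ell,e_\ell,V_\ell)\leqslant(D,e_D)$, the Mackey formula (precisely as in the proof of Lemma \ref{lem:compatibility conjugacy induction}~(iii)) yields
\[
\Homom_\ringO(\Ind_{P_\ell}^D V_\ell,\Ind_{P_{\ell'}}^D V_{\ell'})
\ \simeq\
\bigoplus_{x\in P_{\ell'}\setminus D/P_\ell}
\Ind_{\Delta Q_x}^{\Delta D}\Homom_\ringO(\Res_{\phi_1^x}V_\ell,\Res_{\phi_2^x}V_{\ell'}),
\]
where $\phi_1^x,\phi_2^x$ are arrows in $\catbr(G,e)$ because every subpair involved lies inside $(D,e_D)$. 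When $M$ is Brauer-friendly, compatibility and fusion-stability force each $\Homom_\ringO(\Res_{\phi_1^x}V_\ell,\Res_{\phi_2^x}V_{\ell'})$ to be an $\ringO$-free permutation $\ringO\Delta Q_x$-module; hence $\Homom_\ringO(iN_\ell,iN_{\ell'})$, a direct summand of the left-hand side, is a permutation $\ringO\Delta D$-module, and $iM$ is endopermutation (capped with vertex $D$ because the $V_\ell$ are capped).

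The converse is the same computation read backwards: if $iM$ is endopermutation then each $\Homom_\ringO(iN_\ell,iN_{\ell'})$ is a permutation $\ringO\Delta D$-module, and restricting it to each $\Delta Q_x$ extracts precisely the permutation-module statement for every pair of restrictions of the $V_\ell$ along arrows in $\catbr(G,e)$ with codomain in $(D,e_D)$. Since every arrow in $\catbr(G,e)$ can, up to $G$-conjugation, be realised as an inclusion into $(D,e_D)$, this recovers the full compatibility and fusion-stability of the source triples, so $M$ is Brauer-friendly. The main obstacle is the bookkeeping in the second paragraph: tracking how $i$ refines to primitive idempotents in each $(\ringO Ge)^P$ with $(P,e_P)\leqslant(D,e_D)$ and ensuring that no spurious contributions arise from double cosets escaping $(D,e_D)$; this is the technical heart of the argument and hinges on a careful use of \cite[Lemma 40.1]{Th�venaz1995}.
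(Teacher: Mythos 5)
Your plan is reasonable but there is a genuine gap in the central technical claim. You assert that after choosing $j\leqslant i$ in $(\ringO Ge)^P$, a Mackey decomposition of the $(\ringO D,\ringO P)$-bimodule $i\ringO Gj$ together with the vanishing argument of \cite[Lemma 40.1]{Th�venaz1995} shows that ``$iN$ is an $\ringO D$-direct summand of $\Ind_P^D V$''. This is false in general. The bimodule $i\ringO Gj$ is a direct summand of $\Res^{G\times G}_{D\times P}\ringO G$, whose Mackey pieces are indexed by double cosets $DgP$; the vanishing argument at best discards those $g$ for which $\null^g(P,e_P)\not\leqslant(D,e_D)$, but it certainly does not reduce you to the single coset $D\cdot 1\cdot P$. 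What survives is a direct sum of terms of the shape $\Ind_{\phi(Q)}^D\Res_\phi V$, where $\phi:(Q,e_Q)\to(D,e_D)$ runs over morphisms of the fusion system $\catf_{(G,e)}(D,e_D)$ realised inside the source algebra. This is exactly the bimodule decomposition of $A=i\ringO Gi$ that ``encodes the fusion system'', which is the nontrivial input the paper delegates to \cite[\S 47]{Th�venaz1995} and \cite{LinckelmannUnpublished}.

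This mis-statement is not cosmetic. In the converse direction it is fatal: if $iN_\ell$ really were a summand of $\Ind_{P_\ell}^D V_\ell$, then the $\ringO\Delta D$-module $\Homom_\ringO(iN_\ell,iN_{\ell'})$ would only ``see'' restrictions along inclusions inside $D$, not along arbitrary $G$-conjugations realising block fusion. In that case being endopermutation as an $\ringO D$-module could not detect fusion-stability with respect to $\catbr(G,e)$, and your deduction that ``this recovers the full compatibility and fusion-stability'' would not follow. Both directions of the paper's proof genuinely rely on the richer bimodule structure of $A$: the forward direction via \cite[Proposition 3.2~(i)]{LinckelmannUnpublished} applied to the fusion-stable module $W=\bigoplus_l\Ind_{P_l}^D V_l$ produced by Lemma~\ref{lem:compatibility conjugacy induction}~(iii), and the converse via the fact that $\catf$ can be read in the $\ringO(D\times D)$-module $A$, so that the $A$-module $iM$ is automatically fusion-stable. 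To repair your argument you would need to replace $\Ind_P^D V$ by the correct fusion-twisted direct sum; at that point you would essentially be reproving Linckelmann's Proposition~3.2, which the paper simply cites.
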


\begin{proof}
Let $M$ be an $\ringO Ge$-module, and let $M= M_1\oplus\cdots\oplus M_n$ be a Krull-Schmidt decomposition. For each integer $l\in\{1,\ldots,n\}$, let $(P_l,e_l,V_l)$ be a source triple of the indecomposable $\ringO Ge$-module $M_l$. By Lemma \ref{lem:vertex subpair and Brauer} (ii), there exists a defect group $D_l$ and a source idempotent $j_l$ of the block $e$ with respect to the defect group $D_l$ such that $P_l\leqslant D_l$ and $\bar e_l\br_{P_l}(j_l)\neq 0$, and such that $M_l$ is isomorphic to a direct summand of the $\ringO Ge$-module $\ringO Gj_l \otimes_{\ringO D_l} \Ind_{P_l}^{D_l} V_l$. Up to replacing the defect group $D_l$ and the idempotent $j_l$ by conjugates, we may assume that $D_l=D$ and $j_l=i$.
We set $W=\Ind_{P_1}^D V_1\oplus\cdots\oplus \Ind_{P_n}^D V_n$.
Then $M$ is isomorphic to a direct summand of the $\ringO G$-module $L=\ringO Gi\otimes_{\ringO D} W$, and $iM$ is isomorphic to a direct summand of the $A$-module $iL = A \otimes_{\ringO D} W$.

Suppose that the $\ringO Ge$-module $M$ is Brauer-friendly. Then, by Lemma \ref{lem:compatibility conjugacy induction} (iii), $W$ is an endopermutation $\ringO D$-module that is fusion-stable with respect to the subpair $(D,e_D)$. By \cite[Proposition 3.2 (i)]{LinckelmannUnpublished}, this implies that the $A$-module $iM$ is an endopermutation $\ringO D$-module.

Conversely, suppose that $iM$ is an endopermutation $\ringO D$-module. Let $\catf$ be the fusion system of the block $e$ with respect to the maximal subpair $(D,e_D)$. We know from \cite[\S 47]{Thévenaz1995} or \cite[\S 2]{LinckelmannUnpublished} that  the fusion system $\catf$ can be read in the $(\ringO D,\ringO D)$-bimodule structure of the source $A=i\ringO Gi$. 
Since $iM$ is an $A$-module, it follows that the endopermutation $\ringO D$-module $iM$ is fusion-stable with respect to the subpair $(D,e_D)$. For any $l\in\{1,\ldots,n\}$, the source $V_l$ is isomorphic to a direct summand of the restriction $\Res^D_{P_l} iM$. Thus, by Lemma \ref{lem:compatibility conjugacy induction} (ii), the triples $(P_l,e_l,V_l)$, $1\leqslant l\leqslant n$, are compatible fusion-stable endopermutation source triples. So $M$ is a Brauer-friendly $\ringO Ge$-module.
\end{proof}

\section{Slash functors}
\label{sec:SlashFunc}

In this section, we extend Dade's slash construction, which has been defined in \cite[Theo\-rem 4.15]{Dade1978}, to take subpairs and Brauer-friendly modules into account. The slash construction is often called deflation-restriction, \emph{e.g.} in \cite{LinckelmannUnpublished} or \cite{Thévenaz2007}. In our context, this would become \emph{deflation-truncation-restriction}. Let us briefly recall Dade's original construction.

Let $R$ be a $p$-group and $P$ be a subgroup of $R$. Let $V$ be an endopermutation $\ringO R$-module. The Brauer quotient $\Br_{\Delta P}(\Endom_\ringO(V))$ has a natural structure of $N_R(P)/P$-algebra over $k$. Moreover, there exists an endo\-per\-mutation $kN_R(P)/P$-module $V[P]$ and an isomorphism of $N_R(P)/P$-algebras $\Br_{\Delta P}(\Endom_\ringO(V))\simeq \Endom_k(V[P])$. The $kN_R(P)/P$-module $V[P]$, which is unique up to (non-unique) isomorphism, is called a $P$-slashed module relative to the $\ringO R$-module $V$. In particular, if $P=R$ and $V$ is a capped indecomposable endopermutation $\ringO P$-module, then the group $N_P(P)/P$ is trivial, and a slashed module $V[P]$ is just the $k$-vector space $k$.

If $V$ is a permutation $\ringO R$-module, then $V$ is \emph{a fortiori} an endopermutation $\ringO R$-module, and the Brauer quotient $\Br_P(V)$ is a $P$-slashed module relative to the $\ringO R$-module $V$. However, in general, the slash construction appears to be functorial in $\Endom_\ringO(V)$, but not in $V$. In a first step, we prove that this construction can actually be turned into a functor in $V$, provided that it is restricted to a suitable category of compatible endopermutation modules.

We consider a finite $p$-group $P$. If $L$ and $M$ are objects of the category $\catperm{\ringO P}$ of permutation $\ringO P$-modules, then one can derive from \cite[Lemma 3.3]{Broué1985} a natural isomorphism
\[
\Br_{\Delta P}(\Homom_\ringO (L,M))\ \tilde\longrightarrow\ \Homom_k(\Br_P(L),\Br_P(M)).
\]
More generally, let $\catm{\ringO P}$ be a full subcategory of the category $\catmod{\ringO P}$ of $\ringO P$-modules. We say that a functor $Sl:\catm{\ringO P}\to\catmod k$ is a $P$-slash functor if, for any two objects $L,M$ in the category $\catm{\ringO P}$, the map $\Homom_{\ringO P}(L,M)\to\Homom_k(Sl(L),Sl(M)), u\mapsto Sl(u),$ factors through an isomorphism 
\[
\Br_{\Delta P}(\Homom_\ringO (L,M)) \ \tilde\longrightarrow \ \Homom_k(Sl(L),Sl(M)).
\]

\begin{lem}
\label{lem:slash functor p-group}
Let $P$ be a finite $p$-group and $\catm{\ringO P}$ be a full subcategory of the category $\catmod{\ringO P}$. Assume that any two capped indecomposable direct summands of  objects in $\catm{\ringO P}$ are compatible endopermutation $\ringO P$-modules. 
\begin{enumerate}[(i)]
\item There exists a $P$-slash functor $\Sl_{P}:\catm{\ringO P}\to\catmod k$.
\item If $Sl:\catm{\ringO P}\to\catmod k$ is another $P$-slash functor, then there exists an isomorphism of functors $\Sl_P\!\tilde{\,\to}\, Sl$, and this isomorphism is unique up to scalar multiplication.
\end{enumerate}
\end{lem}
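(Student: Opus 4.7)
The plan is to build $\Sl_P$ by applying Dade's $P$-slashed construction to the maximal capped direct summand of each object. If no object of $\catm{\ringO P}$ admits an indecomposable summand with vertex $P$, the zero functor trivially satisfies the required condition, so assume the contrary. For each $M \in \catm{\ringO P}$ I fix a Krull-Schmidt decomposition and group its summands into $M = V_M \oplus M'$, where $V_M$ is the sum of the indecomposable summands of vertex $P$ and $M'$ is the sum of the remaining ones. The compatibility hypothesis guarantees that $V_M$ is itself an endopermutation $\ringO P$-module, and Dade's construction provides a $k$-vector space $V_M[P]$ together with an isomorphism of $k$-algebras $\Br_{\Delta P}(\Endom_\ringO V_M) \isom \Endom_k(V_M[P])$. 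I set $\Sl_P(M) := V_M[P]$.

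For a morphism $u : L \to M$, write $u$ as a $2 \times 2$ matrix with respect to $L = V_L \oplus L'$ and $M = V_M \oplus M'$, and let $u_{VV} : V_L \to V_M$ be its $(V_L, V_M)$-component. By compatibility the direct sum $V_L \oplus V_M$ is again endopermutation, and applying Dade's construction to it yields an isomorphism of $k$-algebras $\Br_{\Delta P}(\Endom_\ringO(V_L \oplus V_M)) \isom \Endom_k(V_L[P] \oplus V_M[P])$ whose $(V_L, V_M)$-block is a natural isomorphism
\[
\Br_{\Delta P}\bigl(\Homom_\ringO(V_L, V_M)\bigr) \ \isom\ \Homom_k(V_L[P], V_M[P]).
\]
I define $\Sl_P(u)$ as the image of $u_{VV}$ under this isomorphism composed with the Brauer projection. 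The remaining blocks of $\Homom_\ringO(L, M)$ have vanishing Brauer quotient at $\Delta P$, because any module $\Homom_\ringO(X, Y) \cong X^* \otimes Y$ with $X$ or $Y$ containing no summand of vertex $P$ is relatively $Q$-projective under the diagonal $P$-action for some proper subgroup $Q < P$. Hence $\Br_{\Delta P}(\Homom_\ringO(L, M)) \cong \Br_{\Delta P}(\Homom_\ringO(V_L, V_M)) \cong \Homom_k(\Sl_P L, \Sl_P M)$, which gives the required factorization; functoriality of $\Sl_P$ then follows by applying the same algebra isomorphism to $V_L \oplus V_M \oplus V_N$ for a composable pair $u, v$.

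For part (ii), given another $P$-slash functor $Sl$, both $\Sl_P(M)$ and $Sl(M)$ carry faithful actions of $\Br_{\Delta P}(\Endom_\ringO M)$ through the defining identifications with their respective endomorphism algebras. Since this Brauer quotient is a matrix algebra over $k$, its simple module is determined up to isomorphism, and such an isomorphism is unique up to scalar; hence $\Sl_P(M) \cong Sl(M)$ via some $\phi_M$ that is unique up to a scalar. I fix one capped indecomposable $V_0$ occurring as a summand of some object of $\catm{\ringO P}$, choose $\phi_{V_0}$, and for every other $M$ define $\phi_M$ by the requirement that it intertwine the isomorphism $\Br_{\Delta P}(\Homom_\ringO(V_0, M)) \isom \Homom_k(\Sl_P V_0, \Sl_P M)$ with its $Sl$-analogue. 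The main obstacle is to check that $\phi$ is natural with respect to \emph{all} morphisms $u : L \to M$, not merely those with source $V_0$; this reduces to the observation that $\Sl_P(u)$ and $Sl(u)$ both arise from the same image of $u$ in the common Brauer quotient $\Br_{\Delta P}(\Homom_\ringO(L, M))$, which forces the naturality square to commute after the two identifications. Rescaling $\phi_{V_0}$ by $\lambda \in k^\times$ rescales every $\phi_M$ by the same $\lambda$, yielding the stated uniqueness up to scalar multiplication.
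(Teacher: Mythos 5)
Your approach in part (i) is genuinely different from the paper's, and it has a real gap. The paper avoids choices by \emph{corepresenting}: it fixes a single capped indecomposable summand $V$ of some object $X$ and defines $\Sl_P(M) := \Br_{\Delta P}(\Homom_\ringO(V,M))$, $\Sl_P(u) := \Br_{\Delta P}(\Homom_\ringO(V,u))$. Functoriality is then free, since $\Sl_P$ is literally the composite of the functors $\Homom_\ringO(V,-)$ and $\Br_{\Delta P}$, and the isomorphism $\Br_{\Delta P}(\Homom_\ringO(L,M)) \isom \Homom_k(\Sl_P L,\Sl_P M)$ is checked by reducing to $L=M=V$ (using that, by the compatibility hypothesis, every capped indecomposable summand is isomorphic to $V$ and the non-capped parts vanish under $\Br_{\Delta P}$).

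Your construction instead \emph{chooses}, independently for each object $M$, a slashed module $V_M[P]$ together with an isomorphism $\Br_{\Delta P}(\Endom_\ringO V_M) \isom \Endom_k(V_M[P])$, and then defines the bridging map $\Br_{\Delta P}(\Homom_\ringO(V_L,V_M)) \to \Homom_k(V_L[P],V_M[P])$ by slashing $V_L\oplus V_M$. This is where the gap lies. The isomorphism $\Br_{\Delta P}(\Endom_\ringO(V_L\oplus V_M)) \isom \Endom_k(V_L[P]\oplus V_M[P])$ is not canonical: even after requiring it to restrict to the previously chosen isomorphisms on the two diagonal blocks, the off-diagonal block is determined only up to a nonzero scalar (conjugation by a block-diagonal unit $\mathrm{diag}(\lambda_L,\lambda_M)$ fixes both diagonal blocks but rescales the $(L,M)$-block by $\lambda_M/\lambda_L$). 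Hence each bridging map $\rho_{LM}$ is chosen up to a scalar $c_{LM}\in k^\times$, and functoriality $\rho_{LN}(vu)=\rho_{MN}(v)\rho_{LM}(u)$ becomes a cocycle condition $c_{LN}=c_{MN}\,c_{LM}$ on these scalars. The sentence ``functoriality of $\Sl_P$ then follows by applying the same algebra isomorphism to $V_L\oplus V_M\oplus V_N$'' does not resolve this: the slash of $V_L\oplus V_M\oplus V_N$ produces yet another set of choices, and nothing guarantees that it restricts to the pairwise choices you have already made. To repair the argument you would need to make a single coherent family of choices across all objects (for example, by first fixing $V[P]$ for the unique capped indecomposable $V$ and then fixing isomorphisms $V_M\cong V^{\oplus n_M}$ for every $M$), which is more work than the corepresentable definition and amounts to re-deriving it.

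A minor point: your claim that $\Homom_\ringO(X,Y)$ is ``relatively $Q$-projective for some proper $Q<P$'' when $X$ has no capped summand is imprecise — it is relatively projective with respect to the \emph{family} of proper subgroups $\Delta Q$, $Q<P$, not a single one — but the conclusion $\Br_{\Delta P}(\Homom_\ringO(X,Y))=0$ is correct and that is all you need. Part (ii) of your argument is in the right spirit and close to the paper's, which also fixes a single capped indecomposable and compares through the evaluation isomorphisms; your naturality verification is somewhat terse but the mechanism (both $\Sl_P(u)$ and $Sl(u)$ are images of $\br_{\Delta P}(u)$ under the two factorisation isomorphisms) is the correct one. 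The substantive issue is the unaddressed coherence problem in part (i).
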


\begin{proof}
If no indecomposable direct summand of an object of the category $\catm{\ringO P}$ is capped, then any $P$-slash functor $Sl:\catm{\ringO P}\to\catmod k$ is zero and the proof of Lemma \ref{lem:slash functor p-group} is straightforward. Thus we may assume that there exists in $\catm{\ringO P}$ an object $X$ that admits a capped indecomposable direct summand $V$.

Let $M$ be an object of $\catm{\ringO P}$. We consider $\Homom_\ringO (V,M)$ as an $\ringO {\Delta P}$-module and we set $\Sl_P(M) = \Br_{\Delta P}(\Homom_\ringO (V,M))$. Let $L,M$ be two object in $\catm{\ringO P}$ and $u:L\to M$ be a morphism of $\ringO P$-modules. Then the map $\Homom_\ringO (V,u):\Homom_\ringO (V,L)\to\Homom_\ringO (V,M)$ is a morphism of $\ringO {\Delta P}$-modules, and we set $\Sl_P(u)=\Br_{\Delta P}(\Homom_\ringO (V,u)):\Sl_P(L)\to\Sl_P(M)$. This clearly defines a functor $\Sl_{P}:\catm{\ringO P}\to\catmod k$.
By construction, for any two objects $L$, $M$ in $\catm{\ringO P}$, the functor $\Sl_P$ induces a $k$-linear map
\[
\Phi^{L,M}:\Br_{\Delta P}(\Homom_\ringO (L,M)) \to 
\Homom_k(\Br_{\Delta P}(\Homom_\ringO (V,L)),\Br_{\Delta P}(\Homom_\ringO (V,M))).
\]
\hspace{\parindent}%
It follows from the assumptions of the lemma that the $\ringO P$-modules $L$ and $M$ decompose as $L=L'\oplus L^\pprime$ and $M=M'\oplus M^\pprime$, where $L'$, $M'$ are direct sums of copies of $V$ and $L^\pprime$, $M^\pprime$ are direct sums of non-capped indecomposable $\ringO P$-modules. 
This implies $\Br_{\Delta P}(\Homom_\ringO(V,L^\pprime))=\Br_{\Delta P}(\Homom_\ringO(V,M^\pprime))= 0$. Hence we may forget about $L^\pprime$, $M^\pprime$ and assume that $L=L'$, $M=M'$. Then $L$ and $M$ are finite direct sums of copies of $V$, so it is enough to prove that $\Phi^{L,M}$ is an isomorphism in the case $L=M=V$, which is trivial since $\Br_{\Delta P}(\Homom_\ringO(V,V)) \simeq k$. It follows that $\Sl_P$ is a $P$-slash functor.

We now consider another $P$-slash functor $Sl:\catm{\ringO P}\to\catmod k$. The $\ringO P$-module $X$ is an object of the category $\catm{\ringO P}$, and there is an idempotent $i\in \Endom_{\ringO P}(X)$ such that $V=iX$. 
We set $j=Sl(i)\in \Endom_k(Sl(X))$, and $W=j \,Sl(X)$.
The map $Sl^X:\Endom_\ringO (X)\to \Endom_k(Sl(X))$ induces an isomorphism $\Br_{\Delta P}(\Endom_\ringO (X)) \isom\Endom_k(Sl(X))$ that sends the idempotent $\br_P(i)$ to $j$, hence an isomorphism 
$\Br_{\Delta P}(\Endom_\ringO (V)) 
\isom
\Endom_k(W)$. It follows that $W$ is a 1-dimensional $k$-vector space. The choice of an isomorphism $\zeta : k\isom W$ brings a natural isomorphism
\[
\phi_M:\Homom_k(W,Sl(M))\ \longisom\ Sl(M), \quad M\in\catob(\catm{\ringO P}).
\]
By assumption, the $P$-slash functor $Sl$ induces an isomorphism $\Br_{\Delta P}(\Homom_\ringO (V,M)) \isom$\linebreak $\Homom_k(W,Sl(M))$, \emph{i.e.}, a natural isomorphism
\[
\psi_M:\Sl_P(M) \ \longisom \ \Homom_k(W,Sl(M)), \quad M\in\catob(\catm{\ringO P}).
\]
So we obtain a natural isomorphism $\xi=\phi\circ\psi:\Sl_P \isom Sl$. Moreover the correspondence $\zeta\leftrightarrow\xi$ is one-to-one, so $\xi$ is unique up to scalar multiplication.
\end{proof}

The second step is to extend the notion of a slash functor to take subpairs into account. In such a situation, what we call a slash functor is actually a little more than a functor.

\begin{defn}
\label{defn:slash functof subpair}
Let $G$ be a finite group, $e$ be a block of the group $G$, and $\catm{\ringO Ge}$ be a subcategory of the category $\catmod{\ringO Ge}$ of $\ringO Ge$-modules. Let $(P,e_P)$ be an $e$-subpair of the group $G$, and $H$ be a subgroup of $G$ such that $PC_G(P)\leqslant H\leqslant N_G(P,e_P)$. Write $\bar H = H/P$. A $(P,e_P)$-slash functor $Sl:\catm{\ringO Ge}\to\catmod{k\bar H\bar e_P}$ is defined by the following data:
\begin{enumerate}[\quad $\bullet$]
\item\vspace{-2mm} for each object $M$ of the category $\catm{\ringO Ge}$, a $k\bar H\bar e_P$-module $Sl(M)$;
\item\vspace{-2mm} for each pair $L,M$ of objects of the category $\catm{\ringO Ge}$, a map
\[
Sl^{L,M}:\Homom_{\ringO P}(L,M)\ \longrightarrow\ \Homom_k(Sl(L),Sl(M));
\]
\end{enumerate}
\vspace{-4mm}
such that
\begin{enumerate}[\quad $\bullet$]
\item\vspace{-2mm} $Sl^{M,M}(1_{\Endom_\ringO (M)}) = 1_{\Endom_k(Sl(M))}$ for any object $M$ of the category $\catm{\ringO Ge}$;
\item\vspace{-2mm} $Sl^{L,N}(v\circ u) = Sl^{M,N}(v)\circ Sl^{L,M}(u)$ for any three objects $L,M,N$ of the category $\catm{\ringO Ge}$ and any two morphisms of $\ringO P$-modules $u:L\to M$, $v:M\to N$;
\item\vspace{-2mm} for any two objects $L,M$ of the category $\catm{\ringO Ge}$, the map $Sl^{L,M}$ factors through an isomorphism of $k(C_G(P)\times C_G(P))\Delta H$-modules 
\[
\Br_{\Delta P}(\Homom_\ringO (e_PL,e_PM))\ \tilde\longrightarrow\  \Homom_k(Sl(L),Sl(M)).
\]
\end{enumerate}
\end{defn}

The first example of a $(P,e_P)$-slash functor is the Brauer functor $\Br_{(P,e_P)} :\catperm{\ringO Ge}\to\catmod{k\bar N_G(P,e_P)\bar e_P}$. To obtain a more general example, we need to consider a subcategory $\catm{\ringO Ge}$ of the category $\catmod{\ringO Ge}$ such that any two objects of $\catm{\ringO Ge}$ are compatible Brauer-friendly $\ringO Ge$-modules. This is what we call a Brauer-friendly category of $\ringO Ge$-modules. 

\begin{rmk} The category of all Brauer-friendly $\ringO Ge$-modules does not fit our purposes, because its objects need not be compatible with one another (unless the block $e$ has defect zero). On the contrary, there are usually various Brauer-friendly categories of $\ringO Ge$-modules. 
For instance, let $(D,e_D)$ be a maximal $e$-subpair, and $\catf = \catf_{(G,e)}(D,e_D)$ be the corresponding fusion system. Denote by $\dade(D,\catf)$ the Dade group of that fusion system, as defined in \cite{LinckelmannMazza}. Any element of that Dade group determines a capped indecomposable endopermutation $\ringO D$-module $V$ that is fusion-stable in the group $G$ with respect to the subpair $(D,e_D)$, as can be deduced from \cite[Theorem 14.2]{Thévenaz2007}. Then we let $\catm{\ringO Ge}(D,e_D,V)$ be the full subcategory of $\catmod{\ringO Ge}$ of which the objects are the direct sums of indecomposable $\ringO Ge$-modules with source triples compatible with $(D,e_D,V)$. In this way, we obtain a collection of Brauer-friendly categories, indexed by the Dade group $\dade(D,\catf)$. In particular, the trivial element of $\dade(D,\catf)$ corresponds to the category $\catperm{\ringO Ge}$ of $p$-permutation $\ringO Ge$-modules.
\end{rmk}

We can easily prove the existence and uniqueness of slash functors over such categories in the case $H=PC_G(P)$. We write $\bar C_G(P) = PC_G(P)/P$.

\begin{lem}
\label{lem:slash functor subpair centraliser}
Let $G$ be a finite group, $e$ be a block of the group $G$, and $\catm{\ringO Ge}$ be a Brauer-friendly category of $\ringO Ge$-modules. Let $(P,e_P)$ be an $e$-subpair of the group $G$.
\begin{enumerate}[(i)]
\item There exists a $(P,e_P)$-slash functor $\Sl_{(P,e_P)}:\catm{\ringO Ge}\to\catmod{k\bar C_G(P)\bar e_P}$. 
\item If $Sl:\catm{\ringO Ge}\to\catmod{k\bar C_G(P)\bar e_P}$ is another $(P,e_P)$-slash functor, then there exists an isomorphism of slash functors $\Sl_{(P,e_P)}\!\tilde{\,\to}\,Sl$, which is unique up to scalar multiplication.
\end{enumerate}
\end{lem}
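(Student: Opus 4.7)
The plan is to reduce this statement to the $p$-group case treated in Lemma~\ref{lem:slash functor p-group}. Set $H = PC_G(P)$, so that $\bar H = \bar C_G(P)$. For any object $M$ of $\catm{\ringO Ge}$, the $\ringO He_P$-module $e_PM$ decomposes, by Lemma~\ref{lem:Brauer-friendly restriction}, as $e_PM = L \oplus L'$, where $\Res^H_P L$ is an endopermutation $\ringO P$-module and $\Res^H_P L'$ contributes nothing to the Brauer quotient with respect to $P$. Moreover, combining that restriction result with Lemma~\ref{lem:compatibility conjugacy induction}, the capped indecomposable summands of the various $\Res^H_P(e_PM)$, as $M$ ranges over $\catm{\ringO Ge}$, are pairwise compatible endopermutation $\ringO P$-modules.

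Next, I would let $\catm{\ringO P}$ denote the full subcategory of $\catmod{\ringO P}$ whose objects are isomorphic to some $\Res^H_P(e_PM)$ for $M$ in $\catm{\ringO Ge}$. By the previous step, $\catm{\ringO P}$ satisfies the hypothesis of Lemma~\ref{lem:slash functor p-group} and therefore admits a $P$-slash functor that I write $\Sl_P$. Setting $\Sl_{(P,e_P)}(M) := \Sl_P(\Res^H_P(e_PM))$ and, for an $\ringO P$-linear map $u:L\to M$, $\Sl_{(P,e_P)}(u) := \Sl_P(e_Pue_P)$, defines the underlying $k$-linear functor.

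To equip it with a $k\bar C_G(P)\bar e_P$-structure, I would recall that the construction in Lemma~\ref{lem:slash functor p-group} yields $\Sl_P(N) = \Br_{\Delta P}(\Homom_\ringO(V, N))$ for a fixed reference module $V$. The left $C_G(P)$-action on $e_PM$ commutes with the $P$-action on $\Homom_\ringO(V, e_PM)$, so it survives the Brauer quotient; the subgroup $Z(P) = P\cap C_G(P)$ acts trivially, so this descends to an action of $\bar C_G(P)$, on which $\bar e_P$ acts as the identity. With this structure, the three axioms of Definition~\ref{defn:slash functof subpair} follow from their counterparts in Lemma~\ref{lem:slash functor p-group}, together with a direct equivariance check for the two $C_G(P)$-actions and the diagonal $\Delta H$-action on $\Br_{\Delta P}(\Homom_\ringO(e_PL, e_PM))$. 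This is the only non-routine verification, and it reduces to the naturality of the Brauer quotient together with the fact that $C_G(P)\times C_G(P)$ and $\Delta H$ both normalise $\Delta P$ in $G\times G$.

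For (ii), I would mimic the argument of Lemma~\ref{lem:slash functor p-group}: pick an object $X$ of $\catm{\ringO Ge}$ such that $\Res^H_P(e_PX)$ has a capped indecomposable summand $V$, cut out by an idempotent $i\in\Endom_{\ringO P}(e_PX)$. Setting $j = Sl(i)$ and $W = j\cdot Sl(X)$, the defining isomorphism of $Sl$ identifies $\Endom_k(W)$ with $\Br_{\Delta P}(\Endom_\ringO(V)) \simeq k$, so $W$ is one-dimensional; a choice of $\zeta : k \isom W$ then produces a natural isomorphism $\xi : \Sl_{(P,e_P)} \isom Sl$, unique up to a scalar. The hard part here will be verifying that $\xi$ is $k\bar C_G(P)\bar e_P$-equivariant rather than merely $k$-linear, but this follows because on both functors the extra action originates from the same $C_G(P)$-action on $e_PM$ and from the idempotent $e_P$ of $\ringO H$, and naturality of $\xi$ transports this action correctly.
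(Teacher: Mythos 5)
Your proposal is correct and follows essentially the same route as the paper: reduce to the $p$-group case over the category of restrictions $e_P\Res^G_P M$, apply Lemma~\ref{lem:slash functor p-group} there, and then transport the $C_G(P)$-action through the Brauer quotient to endow $\Sl_P(e_PM)$ with a $k\bar C_G(P)\bar e_P$-structure. The one place where the mechanism differs slightly is part (ii): the paper restricts the second slash functor $Sl$ to a $P$-slash functor $Sl'$ on $\catm{\ringO P}$ and cites Lemma~\ref{lem:slash functor p-group}(ii) for the isomorphism $\xi'$, whereas you re-run the proof of that lemma inside $\catm{\ringO Ge}$; both land in the same place, but the paper's delegation is a bit cleaner. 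Also note that the triviality of the $Z(P)$-action, which you assert, does require a small argument (the resulting character $Z(P)\to k^\times$ is trivial because $Z(P)$ is a $p$-group and $k$ has characteristic $p$); the paper glosses this over as well.
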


\begin{proof}
Let $\catm{\ringO P}$ be the full subcategory of $\catmod{\ringO P}$ over the essential image of the truncation-restriction functor $e_P\Res^G_P$.
By Lemma \ref{lem:Brauer-friendly restriction}, the category $\catm{\ringO P}$ satisfies the assumptions of Lemma \ref{lem:slash functor p-group}. Thus there exists a $P$-slash functor $\Sl_P:\catm{\ringO P}\to\catmod k$. 

For any object $M$ in $\catm{\ringO Ge}$, we have a morphism of algebras $\Sl_P^{e_P M}:\Endom_{\ringO P}(e_PM) \to \Endom_k(\Sl_P(e_PM))$, and a natural group morphism $\iota:C_G(P) \to \Endom_{\ringO P}(e_PM)^\times$. The composition $\Sl_P^{e_P M}:C_G(P) \to \Endom_k(\Sl_P(e_PM))^\times$ makes the endomorphism ring $\Endom_k(\Sl_P(e_PM))$ a $C_G(P)$-interior algebra. As a consequence, it makes the slashed module $\Sl_P(e_PM)$ a $k\bar C_G(P)\bar e_P$-module, which we denote by $\Sl_{(P,e_P)}(M)$.

For any two objects $L,M$ in $\catm{\ringO Ge}$ and any morphism of $\ringO P$-module $u:L\to M$, we denote by $e_Pue_P:e_PL\to e_PM$ the morphism of $\ringO P$-modules induced by $u$. 
We consider $\Sl_P(e_Pue_P)$ as a $k$-linear map $\Sl_{(P,e_P)}(L)\to\Sl_{(P,e_P)}(M)$, and we denote it by $\Sl_{(P,e_P)}(u)$. 
If $u$ is a morphism of $\ringO Ge$-modules, then it follows from the functoriality of $\Sl_P$ and from the definition of the $\bar C_G(P)$-interior structures on $\Endom_k(\Sl_{(P,e_P)}(L))$ and $\Endom_k(\Sl_{(P,e_P)}(M))$ that $\Sl_{(P,e_P)}(u)$ is a morphism of $k\bar C_G(P)$-modules.
This defines a $(P,e_P)$-slash functor $\Sl_{(P,e_P)}:\catm{\ringO Ge}\to\catmod{k\bar C_G(P)\bar e_P}$. 

If $Sl:\catm{\ringO Ge}\to\catmod{k\bar C_G(P)\bar e_P}$ is another $(P,e_P)$-slash functor, then $Sl$ induces a $P$-slash functor $Sl' :\catm{\ringO P}\to \catmod k$, such that $Sl' \circ\,e_P\Res^G_P = \Res^{\bar C_G(P)}_1 \circ\, Sl$. By Lemma \ref{lem:slash functor p-group}, there exists an isomorphism of functors $\xi':\Sl_P\to Sl'$. For any object $M$ of the category $\catm{\ringO Ge}$, the isomorphism of $k$-vector spaces $\xi'_{e_PM}:\Sl_P(e_PM)\to Sl'(e_P M)$ may be seen as an isomorphism of $k\bar C_G(P)$-module $\Sl_{(P,e_P)}(M)\to Sl(M)$, which we denote by $\xi_M$. By construction, for any two objects $L,M$ in $\catm{\ringO Ge}$ and any morphism of $\ringO P$-module $u:L\to M$, there is a commutative diagram
\[
\xymatrix{
\Sl_{(P,e_P)}(L) \ar[rr]^{\xi_L} \ar[d]_{\Sl_{(P,e_P)}^{L,M}(u)} && 
Sl(L) \ar[d]^{Sl^{L,M}(u)} \\
\Sl_{(P,e_P)}(M) \ar[rr]^{\xi_M} &&
Sl(M)
}
\]
Thus $\xi:\Sl_{(P,e_P)}\to Sl$ is what we would like to call an isomorphism of $(P,e_P)$-slash functors. The uniqueness of $\xi$ up to scalar multiplication follows from the similar uniqueness of $\xi'$.
\end{proof}

The third and last step is to consider a $(P,e_P)$-slash functor with a codomain such as the category $\catmod{k\bar H\bar e_P}$, where $H$ is any subgroup of $G$ with $PC_G(P)\leqslant H\leqslant N_G(P,e_P)$. The existence of such a slash functor follows from a deep result proven by Puig in \cite{Puig1986}. Unfortunately, we lose the uniqueness of a slash functor up to isomorphism. We need to explain how a $(P,e_P)$-slash functor $Sl:\catm{\ringO Ge}\to\catmod{k\bar H\bar e_P}$ may be twisted by a linear character $\chi:\bar H/\bar C_G(P) \to k^\times$.

If $M$ is an object of the category $\catm{\ringO Ge}$, then $Sl(M)$ is a $k\bar H\bar e_P$-module. We set $\chi_*Sl(M) = Sl(M)$ as a $k$-vector space, and we endow $\chi_*Sl(M)$ with the action $\cdot_\chi$ of the group $\bar H$ defined by $h \cdot_\chi m = h\cdot \chi(h)m$ for any $h\in \bar H$ and $m\in Sl(M)$, where the single dot stands for the preexisting action of the group $\bar H$ on the $k$-vector space $Sl(M)$. If $L,M$ are two objects of $\catm{\ringO Ge}$ and $u:L\to M$ is a morphism of $\ringO P$-modules, then we set $\chi_*Sl^{L,M}(u) = Sl^{L,M}(u)$, considered as a $k$-linear map $\chi_*Sl(L)\to\chi_*Sl(M)$. This defines another $(P,e_P)$-slash functor $\chi_*Sl:\catm{\ringO Ge}\to\catmod{k\bar H\bar e_P}$. Notice that the slash functors $Sl$ and $\chi_*Sl$ might be isomorphic.

\begin{thm}
\label{thm:slash functor subpair}
Let $G$ be a finite group, $e$ be a block of the group $G$, and $\catm{\ringO Ge}$ be a Brauer-friendly category of $\ringO Ge$-modules. Let $(P,e_P)$ be an $e$-subpair of the group $G$, and $H$ be a subgroup of $G$ such that $PC_G(P)\leqslant H\leqslant N_G(P,e_P)$.
\begin{enumerate}[(i)]
\item There exists a $(P,e_P)$-slash functor $Sl_{(P,e_P)}:\catm{\ringO Ge}\to\catmod{k\bar H\bar e_P}$. 
\item If $Sl:\catm{\ringO Ge}\to\catmod{k\bar H\bar e_P}$ is another $(P,e_P)$-slash functor, then there exists a 
linear character $\chi:\bar H/\bar C_G(P)\to k^\times$ and an isomorphism of slash functors \mbox{$\chi_* Sl_{(P,e_P)} \!\tilde{\,\to}\, Sl$.} 
\end{enumerate}
\end{thm}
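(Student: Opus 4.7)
The plan is to bootstrap from Lemma~\ref{lem:slash functor subpair centraliser}, which already supplies a $(P,e_P)$-slash functor $Sl^{\circ}:\catm{\ringO Ge}\to\catmod{k\bar C_G(P)\bar e_P}$, and to enrich, in a way natural in $M$, the $k\bar C_G(P)\bar e_P$-module structure on each $Sl^{\circ}(M)$ to a $k\bar H\bar e_P$-module structure.

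For (i), fix $M\in\catm{\ringO Ge}$ and set $S(M)=\Br_{\Delta P}(\Endom_\ringO(e_PM))$. The slash-functor axioms identify $S(M)$ with $\Endom_k(Sl^{\circ}(M))$ as a $k\bar C_G(P)$-interior algebra. Since $H$ normalises the subpair $(P,e_P)$, conjugation endows $S(M)$ with a natural $\bar H$-algebra structure extending the $\bar C_G(P)$-interior one. The deep result of Puig in \cite{Puig1986} then upgrades this outer $\bar H$-action to a full $\bar H$-interior structure on $S(M)$. Transporting the enrichment through $S(M)\simeq\Endom_k(Sl^{\circ}(M))$ endows $Sl^{\circ}(M)$ with an action of $\bar H$ extending that of $\bar C_G(P)$; this is the desired module $Sl_{(P,e_P)}(M)$. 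The main obstacle is to make this construction functorial rather than pointwise; my plan is to realise every object of $\catm{\ringO Ge}$ through a single source triple $(D,e_D,V)$ (via Lemma~\ref{lem:BF and source algebra}), to fix once and for all a Puig extension for the corresponding ``universal'' endomorphism algebra, and to transport it to each $M$ along the natural maps coming from the source algebra. The morphism-wise structure maps $Sl^{\circ, L,M}$ are then automatically $k\bar H$-linear, since they factor through the $\Delta H$-equivariant isomorphism $\Br_{\Delta P}(\Homom_\ringO(e_PL,e_PM))\isom\Homom_k(Sl^{\circ}(L),Sl^{\circ}(M))$ provided by the slash-functor axioms.

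For (ii), let $Sl$ be another $(P,e_P)$-slash functor with the same codomain. Restricting its target along $\bar C_G(P)\hookrightarrow\bar H$ yields a slash functor into $\catmod{k\bar C_G(P)\bar e_P}$, and Lemma~\ref{lem:slash functor subpair centraliser}~(ii) provides an isomorphism $\xi:Sl_{(P,e_P)}\isom Sl$ at that coarser level, unique up to a global scalar. For each $h\in\bar H$ and each object $M$, consider the discrepancy
\[
\alpha_M(h)\ =\ \xi_M^{-1}\circ h_{Sl(M)}\circ\xi_M\circ h_{Sl_{(P,e_P)}(M)}^{-1},
\]
a $k$-linear automorphism of $Sl_{(P,e_P)}(M)$. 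It induces the identity on $S(M)\simeq\Endom_k(Sl_{(P,e_P)}(M))$, since both $\bar H$-actions lift the same conjugation on $S(M)$ dictated by the slash-functor axioms. Hence $\alpha_M(h)$ is a scalar $\chi_M(h)\in k^\times$, and the assignment $h\mapsto\chi_M(h)$ is a group morphism that is trivial on $\bar C_G(P)$. The one remaining non-routine step is to check that $\chi_M$ is independent of $M$; I would derive this from the naturality of $\xi$ together with the fact that every object of $\catm{\ringO Ge}$ is connected to the fixed source triple $(D,e_D,V)$ by non-zero morphisms on slashed modules, which forces $\chi_M=\chi_V$ throughout. Writing $\chi$ for the common value, one obtains the required linear character $\bar H/\bar C_G(P)\to k^\times$, and $\xi$ becomes, by the very definition of the twist $\chi_*$, an isomorphism $\chi_*Sl_{(P,e_P)}\isom Sl$ of $(P,e_P)$-slash functors into $\catmod{k\bar H\bar e_P}$.
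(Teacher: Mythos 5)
Your overall strategy — bootstrap from Lemma~\ref{lem:slash functor subpair centraliser}, invoke Puig's extension theorem from \cite{Puig1986} to lift the $C_G(P)$-interior $H$-algebra structure on $\Br_{\Delta P}(\Endom_\ringO(e_PM))$ to an $H$-interior one, fix one such extension as a reference, and propagate it — is exactly the paper's strategy. Your part~(ii) is in fact more explicit than the paper's: the paper argues by transport of structure to reduce $Sl$ to $Sl_{(P,e_P)}$ at the $\bar C_G(P)$-level and then observes that only the choice of extension on the reference object can differ, absorbed by a twist; your discrepancy cocycle $\alpha_M(h)$, shown to be scalar and independent of $M$ by chasing a nonzero morphism through the slash functor, produces the character $\chi$ directly, and this computation is correct (the constancy of $\chi_M$ does indeed follow from the naturality of $\xi$ once you have some $u$ with $Sl_{(P,e_P)}(u)\neq 0$, which exists because $\Br_{\Delta P}(\Homom_\ringO(e_PL,e_PM))\isom\Homom_k(Sl_{(P,e_P)}(L),Sl_{(P,e_P)}(M))$ is nonzero whenever both sides' targets are).

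There is, however, a genuine gap in your part~(i): the propagation mechanism. You propose to ``realise every object through a single source triple $(D,e_D,V)$ via Lemma~\ref{lem:BF and source algebra}'' and to ``fix once and for all a Puig extension for the corresponding universal endomorphism algebra.'' This is underspecified, and the natural reading is problematic: the ``universal'' module $\ringO Gi\otimes_{\ringO D}W$ built from a source idempotent is \emph{not} in general an object of the fixed Brauer-friendly category $\catm{\ringO Ge}$, so the slash functor is not defined on it, and a Puig extension chosen there cannot literally be transported into the category. Moreover there is no single such module covering all objects of $\catm{\ringO Ge}$ at once, since their sources vary. The paper avoids this entirely: it simply fixes any object $X$ of $\catm{\ringO Ge}$ with $\Sl_{(P,e_P)}(X)\neq 0$ (if none exists, the functor is zero and there is nothing to prove), fixes a Puig extension on $\Sl_{(P,e_P)}(X)$, and then for every $M$ observes that there is a \emph{unique} $k\bar H$-structure on $\Sl_{(P,e_P)}(M)$ making the slash-functor isomorphism $\Br_{\Delta P}(\Homom_\ringO(e_PX,e_PM))\isom\Homom_k(\Sl_{(P,e_P)}(X),\Sl_{(P,e_P)}(M))$ $k(C_G(P)\times C_G(P))\Delta H$-linear — uniqueness coming precisely from $\Sl_{(P,e_P)}(X)\neq 0$. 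This replaces your ``automatically $k\bar H$-linear'' claim with an actual definition of the $\bar H$-structure, after which $\Delta H$-equivariance of all the structure maps $Sl^{L,M}$ is forced. You should replace the source-algebra detour by this direct choice of a reference object inside $\catm{\ringO Ge}$.
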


\begin{proof}
We know from Lemma \ref{lem:slash functor subpair centraliser} (i) that there exists a $(P,e_P)$-slash functor $\Sl_{(P,e_P)}:\catm{\ringO Ge}\to\catmod{k\bar C_G(P)\bar e_P}$. We may assume that this slash functor is nonzero, \emph{i.e.}, that there exists an object $X$ of $\catm{\ringO Ge}$ such that $\Sl_{(P,e_P)}(X)\neq 0$.

Let $L$ and $M$ be two objects of the category $\catm{\ringO Ge}$. The $\ringO$-module $\Homom_\ringO(e_PL,e_PM)$ has a natural structure of $\ringO(H\times H)$-module, so the Brauer quotient $\Br_{\Delta P}(\Homom_\ringO(e_PL,e_PM))$ admits a natural structure of $k(C_G(P)\times C_G(P))\Delta N_G(P,e_P)$-module. If $L=M$, the Brauer quotient $\Br_{\Delta P}(\Endom_\ringO(e_PM)) \simeq \Endom_k(\Sl_{(P,e_P)}(M))$ is more precisely a $C_G(P)$-interior $N_G(P,e_P)$-algebra, as defined in \cite{Puig2002}. 

The main result of \cite{Puig1986} implies that $\Endom_k(\Sl_{(P,e_P)}(M))$ may be extended (non-uniquely) to an $H$-interior algebra. This defines an extension of the slashed module $\Sl_{(P,e_P)}(M)$ to a $k\bar H$-module, which is unique up to twisting by a linear character $\chi:\bar H/\bar C_G(P)\to k^\times$. Let us choose, once and for all, such an extension for the slashed module $\Sl_{(P,e_P)}(X)$.

Then, for any object $M$ of $\catm{\ringO Ge}$, the slashed module $\Sl_{(P,e_P)}(M)$ may admit several extensions to a $k\bar H$-module. Each one of them defines a  structure of $k(\bar H\times \bar H)$-module on $\Homom_k(\Sl_{(P,e_P)}(X),\Sl_{(P,e_P)}(M))$. By definition, the slash functor $\Sl_{(P,e_P)}$ induces an isomorphism of $k(C_G(P)\times C_G(P))$-modules
\[
\Br_{\Delta P}(\Homom_\ringO (e_PX,e_PM))\ \tilde\longrightarrow\  \Homom_k(\Sl_{(P,e_P)}(X),\Sl_{(P,e_P)}(M)).
\]
Since the slashed module $\Sl_{(P,e_P)}(X)$ is nonzero, there is only one extension of $\Sl_{(P,e_P)}(M)$ to a $k\bar H$-module such that the above map is a morphism of $k(C_G(P)\times C_G(P))\Delta H$-module. We denote this $k\bar He_P$-module by $Sl_{(P,e_P)}(M)$, in italics. This defines a $(P,e_P)$-slash functor
\[
Sl_{(P,e_P)}:\catm{\ringO Ge}\ \longrightarrow \ \catmod{k\bar H\bar e_P}.
\] 
\hspace{\parindent}%
Let $Sl:\catm{\ringO Ge}\ \longrightarrow \ \catmod{k\bar H\bar e_P}$ be another $(P,e_P)$-slash functor. Then, by Lemma \ref{lem:slash functor subpair centraliser} (ii), the restriction $\Res^{\,\bar H}_{\bar C_G(P)}Sl$ is isomorphic to the slash functor $\Sl_{(P,e_P)}$. By transport of structure, we may suppose that $\Res^{\,\bar H}_{\bar C_G(P)}Sl = \Sl_{(P,e_P)}$, \emph{i.e.}, the only difference between $Sl_{(P,e_P)}$ and $Sl$ is the choice of an extension of the slashed module $\Sl_{(P,e_P)}(X)$ to a $k\bar H$-module. Up to twisting $Sl_{(P,e_P)}$ by a linear character, we can obtain $Sl_{(P,e_P)}(X)=Sl(X)$, which implies $Sl_{(P,e_P)}=Sl$. This completes the proof of the theorem.
\end{proof}

In general, there are several $(P,e_P)$-slash functors on a given Brauer-friendly category, none of which can be singled out. However, in the case of the category $\catperm{\ringO Ge}$ of $p$-permutation modules, there is a canonical choice : the Brauer functor~$\Br_{(P,e_P)}$.

\section{The essential image of a slash functor}
\label{sec:Slash}

In this section, we prove that the essential image of a slash functor is, as expected, a Brauer-friendly category. This enables us to compose slash functors, and to prove that such a composition results in another slash functor. These results are prepared by two technical lemmas, which will also be used in the next section.

\begin{lem}
\label{lem:slash induction}
Let $H$ be a finite group and $P$ be a normal $p$-subgroup of $H$. Let $R$ be a $p$-subgroup of $H$ that contains $P$, and $V$ be an $\ringO R$-module. Suppose that the restriction $\Res^H_P\Ind_R^H V$ is an endopermutation $\ringO P$-module. The $C_H(P)$-interior $H$-algebra $\Br_{\Delta P}(\Ind_R^H \Endom_\ringO (V))$ admits a (non-unique) extension to an $H$-interior algebra. Once this extension has been chosen, there is a natural isomorphism of $H$-interior algebras
\[
\Phi : \Ind_R^H \Br_{\Delta P}(\Endom_k(V)) \ \longisom \ \Br_{\Delta P}(\Ind_R^H \Endom_\ringO(V)),
\]
where we write $\Ind_R^H$ for the induction of interior algebras (see \cite[\S 16]{Thévenaz1995}). 
\end{lem}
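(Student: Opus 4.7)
The plan is to identify both $H$-interior algebras in the statement with the endomorphism algebra of a single $k H$-module obtained by slashing the induced module. The starting point is the natural identification $\Endom_\ringO(\Ind_R^H V)\simeq \Ind_R^H\Endom_\ringO(V)$ of $H$-interior algebras (a standard property of induction of interior algebras), together with the observation that, since $\Res^H_P\Ind_R^H V$ is an endopermutation $\ringO P$-module by hypothesis, the Brauer quotient $\Br_{\Delta P}(\Endom_\ringO(\Ind_R^H V))$ is a matrix algebra $\Endom_k(W)$ with $W=(\Ind_R^H V)[P]$. Choosing the extension to an $H$-interior algebra then amounts, as in the proof of Theorem \ref{thm:slash functor subpair}, to choosing an extension of $W$ to a $k H$-module.

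Next I would use the normality of $P$ in $H$ to decompose $W$ summand-wise. For a transversal $\mathcal T\subseteq H$ of $H/R$, the standard decomposition $\Ind_R^H V=\bigoplus_{h\in\mathcal T} h\otimes V$ restricts to $\Res^H_P\Ind_R^H V\simeq\bigoplus_{h\in\mathcal T} V^{(h)}$, where $V^{(h)}$ denotes $V|_P$ with $P$-action precomposed by the automorphism $p\mapsto h^{-1}ph$ of $P$ (well-defined because $P\trianglelefteq H$). The $V^{(h)}$ are pairwise compatible endopermutation $\ringO P$-modules, as direct summands of an endopermutation module, so additivity of the slash construction gives a canonical $C_H(P)$-equivariant identification $W\simeq\bigoplus_{h\in\mathcal T} V^{(h)}[P]$, and functoriality identifies each $V^{(h)}[P]$ canonically with the $k$-vector space $V[P]$.

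The third step is to promote this to a $k H$-module isomorphism $W\simeq\Ind_R^H V[P]$, where $V[P]$ is equipped with a $k R$-module extension of its canonical $k C_R(P)$-interior structure; the existence of such an extension is guaranteed by Puig's theorem \cite{Puig1986}, already invoked in the proof of Theorem \ref{thm:slash functor subpair}. Once this $k R$-extension is chosen compatibly with the prescribed $k H$-extension of $W$, the summand-wise identifications $V^{(h)}[P]\simeq h\otimes V[P]$ assemble into an $H$-equivariant isomorphism. The map $\Phi$ is then the composite
\[
\Ind_R^H\Br_{\Delta P}(\Endom_\ringO V)\ \simeq\ \Ind_R^H\Endom_k(V[P])\ \simeq\ \Endom_k(\Ind_R^H V[P])\ \simeq\ \Endom_k(W)\ \simeq\ \Br_{\Delta P}(\Ind_R^H\Endom_\ringO V),
\]
the middle isomorphism being another instance of the same commutation between $\Endom$ and $\Ind_R^H$ for interior algebras.

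The main obstacle is the compatibility of the two non-canonical interior extensions. The hypothesis fixes an $H$-interior extension of the right-hand side, equivalently a $k H$-structure on $W$; one must then show that there exists a $k R$-extension of $V[P]$, unique up to a scalar (which does not affect the resulting algebra isomorphism), making the summand-wise identification $H$-equivariant. This should follow from the uniqueness-up-to-twist clause of Puig's theorem: any two valid $k R$-extensions of $V[P]$ differ by a linear character of $\bar R/\bar C_R(P)$, and induction of interior algebras translates such a character into a linear character of $\bar H/\bar C_H(P)$; the prescribed $k H$-structure on $W$ forces this character to be trivial, pinning down the extension of $V[P]$ up to scalar and securing the naturality of $\Phi$.
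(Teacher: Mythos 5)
Your plan is genuinely different from the paper's: you work at the level of slashed \emph{modules} $V[P]$ and $W=(\Ind_R^HV)[P]$, trying to produce a $k\bar H$-module isomorphism $\Ind_R^HV[P]\simeq W$ and then pass to endomorphism algebras, whereas the paper never introduces any slashed module and works entirely with the algebras $S=\Endom_\ringO(V)$, $A=\Ind_R^HS$ and their Brauer quotients. The paper constructs $\Phi$ as the unique $k(H\times H)$-module morphism $\Ind_R^H\Br_{\Delta P}(S)\to\Br_{\Delta P}(A)$ extending the Brauer image $\phi_P:\Br_{\Delta P}(S)\to\Br_{\Delta P}(A)$ of the canonical embedding $S\hookrightarrow A$, via the universal property of induction; it then checks bijectivity summand by summand in the $k(R\times R)$-decompositions of both sides. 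The crucial compatibility there takes place at the level of the $p$-group $R$, where it is easy: the matrix algebra $\Br_{\Delta P}(S)$ over the algebraically closed field $k$ has a \emph{unique} $R$-interior structure (because $H^2(R,k^\times)=0$ and $\Homom(R,k^\times)=1$ for a $p$-group), so the two possible $R$-interior structures on the idempotent-cut subalgebra $\alpha_P\Br_{\Delta P}(A)\alpha_P$ automatically agree.

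There is a soft but genuine gap in the step you flag as ``the main obstacle.'' First, the assertion that ``functoriality identifies each $V^{(h)}[P]$ canonically with the $k$-vector space $V[P]$'' is not literally correct: $V^{(h)}$ and $\Res^R_PV$ are distinct $\ringO P$-modules (related by the automorphism $p\mapsto h^{-1}ph$), the slash construction is only unique up to non-unique isomorphism, and the identifications you want to use are only natural up to scalar. Second, and more seriously, the final character-twisting argument is not carried out, and as sketched its direction is suspect: you claim that induction ``translates a linear character of $\bar R/\bar C_R(P)$ into a linear character of $\bar H/\bar C_H(P)$'' and that the prescribed $k\bar H$-structure on $W$ ``forces this character to be trivial.'' What is actually needed is the reverse: compare $\Ind_{\bar R}^{\bar H}V[P]$ (for an arbitrary choice of $k\bar R$-extension of $V[P]$) with the prescribed $W$; by Puig's uniqueness-up-to-twist they differ by some $\chi\in\Homom(\bar H/\bar C_H(P),k^\times)$, and then the projection formula $\Ind_{\bar R}^{\bar H}\bigl(V[P]\otimes\Res^{\bar H}_{\bar R}\chi\bigr)\simeq\bigl(\Ind_{\bar R}^{\bar H}V[P]\bigr)\otimes\chi$ shows that replacing the $k\bar R$-extension of $V[P]$ by its twist by $\Res^{\bar H}_{\bar R}\chi$ (which does land in $\Homom(\bar R/\bar C_R(P),k^\times)$ since $\bar R\cap\bar C_H(P)=\bar C_R(P)$) gives exactly the prescribed extension of $W$. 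This does make your route work, but the projection formula is indispensable here and is not invoked. Finally, you should note at the outset (as the paper does) that one may assume $\Br_{\Delta P}(\Endom_\ringO(V))\neq 0$, since otherwise both sides vanish and the statement is vacuous.
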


\begin{proof} 
Let us write $S=\Endom_\ringO(V)$, an $R$-interior matrix algebra over $\ringO$ that can also be seen as a $\ringO(R\times R)$-module. We may assume that $\Br_{\Delta P}(S)\neq 0$, since the conclusion of the lemma is trivial otherwise.
On the one hand, the $H$-interior algebra $A=\Ind_R^H S$ is defined, as an $\ringO(H\times H)$-module, by
\[
A \ = \ \ringO H \otimes_{\ringO R} S \otimes_{\ringO R} \ringO H.
\]
The map $\phi:S\to A, s\mapsto 1_H\otimes s\otimes 1_H$ is an embedding of algebras. It induces an isomorphism of $R$-algebras $S\isom \alpha A\alpha $, where $\alpha =\phi(1)$ is an idempotent of the algebra $A^R$. 
By \cite{Puig1986}, the $C_H(P)$-interior $H$-algebra $A_P=\Br_{\Delta P}(A)$ admits an extension to a $H$-interior algebra, which makes $A_P$ a $k(H\times H)$-module. Consider the idempotent $\alpha_P=\br_P(\alpha)\in (A_P)^R$, and the $R$-interior subalgebra $\alpha_PA_P\alpha_P$.
The decomposition $1_{A_P}=\sum_{g\in H/R} \null^g\alpha_P$ of the unity into mutually orthogonal idempotents brings a decomposition of the $k(R\times R)$-module $A_P$:
\[
A_P 
\ =\ 
\bigoplus_{g,h\in H/R} \null^g\alpha_P \ A_P \ \null^h\alpha_P
\ =\ 
\bigoplus_{g,h\in H/R} g(\alpha_PA_P \alpha_P)h^{-1}.
\]
On the other hand,  the Brauer quotient $S_P = \Br_{\Delta P}(S)$ is a matrix algebra over $k$, with a natural $R$-interior structure that also makes it a $k(R\times R)$-module. The $H$-interior algebra $B=\Ind_R^H S_P$ is defined, as a $k(H\times H)$-module, by
\[
B \ = \ kH \otimes_{kR} S_P \otimes_{kR} kH.
\]
The embedding $\psi:S_P\to B, s\mapsto 1_H\otimes s\otimes 1_H$ induces an isomorphism of $R$-algebras $S_P\isom \beta B\beta$, where $\beta =\psi(1)\in B^R$. 
The decomposition $1_B=\sum_{g\in H/R} \null^g\beta$ brings a decomposition of the $k(R\times R)$-module $B$:
\[
B = \bigoplus_{g,h\in H/R} \null^g\beta\  B\ \null^h\beta   = \bigoplus_{g,h\in H/R} g(\beta  B\beta  )h^{-1}.
\]
The Brauer functor $\Br_{\Delta P}$ sends the map $\phi:S\to A$ to a morphism of algebras $\phi_P:S_P\to A_P$, which restricts to an isomorphism of $R$-algebras $\phi'_P:S_P\isom \alpha_PA_P\alpha_P$. By uniqueness of the $R$-interior structure on $S_P$, $\phi'_P$ is an isomorphism of $R$-interior algebras. Thus $\phi_P:S_P\to A_P$ is a morphism of $k(R\times R)$-modules. By the universal property of induced modules, there exists a unique morphism of $k(H\times H)$-modules $\Phi:B\to A_P$ such that $\Phi\circ \psi=\phi_P$.

By construction, we have $\Phi(\beta  )=\alpha_P$ and $\Phi$ induces an isomorphism of $R$-interior algebras $\beta  B\beta  \isom \alpha_PA_P\alpha_P$. Since $\Phi$ is a morphism of $k(H\times H)$-modules, it follows from the above decompositions of $B$ and $A_P$ that $\Phi$ is an isomorphism of $k(R\times R)$-modules. Then the definition of induced interior algebras implies that $\Phi$ is an isomorphism of $H$-interior algebras.
\end{proof}

\begin{lem}
\label{lem:slash induction with idempotent}
With the assumptions of Lemma \ref{lem:slash induction}, let $i$ be an idempotent of the algebra $(\ringO H)^R$. Write $i_P=\br_P(i)$. The isomorphism $\Phi$ induces an isomorphism of $H$-interior algebras
\[
\Phi_i : kHi_P \otimes_{kR} \Br_{\Delta P}(\Endom_k(V)) \otimes_{kR} i_P kH
\ \longisom \ 
\Br_{\Delta P}(\ringO Hi \otimes_{\ringO R} \Endom_k(V) \otimes_{\ringO R} i\ringO H).
\]
\end{lem}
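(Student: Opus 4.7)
The plan is to derive $\Phi_i$ as the restriction of the $H$-interior isomorphism $\Phi$ from Lemma \ref{lem:slash induction} to suitable corner subalgebras, cut on the $B$-side by $i_P$ and on the $A_P$-side by (the Brauer image of) $i$.

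First I would observe that since $i\in(\ringO H)^R\subseteq(\ringO H)^P$ and $R$ normalises $P$, the Brauer image $i_P=\br_P(i)$ lies in $(kC_H(P))^R$, so that the corner $kHi_P\otimes_{kR}S_P\otimes_{kR}i_PkH$ of $B=\Ind_R^H S_P$ (with $S_P=\Br_{\Delta P}(\Endom_\ringO V)$) is well-defined. Writing $\iota_A:\ringO H\to A$ and $\iota_B:kH\to B$ for the $H$-interior structure maps, I would set $\tilde i=\iota_A(i)\in A^R$ and $\tilde\jmath=\iota_B(i_P)\in B^R$, and note that the standard description of corners in an induced interior algebra identifies $\tilde\jmath B\tilde\jmath$ with $kHi_P\otimes_{kR}S_P\otimes_{kR}i_PkH$ and $\tilde i A\tilde i$ with $\ringO Hi\otimes_{\ringO R}S\otimes_{\ringO R}i\ringO H$.

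The key step would be to verify the identity $\Phi(\tilde\jmath)=\br_{\Delta P}(\tilde i)$ in $A_P$. Being an isomorphism of $H$-interior algebras, $\Phi$ satisfies $\Phi\circ\iota_B=\iota_{A_P}$; moreover, Puig's extension of the $C_H(P)$-interior structure to an $H$-interior structure on $A_P$ (chosen in Lemma \ref{lem:slash induction}) was made precisely so that $\iota_{A_P}$ extends the $C_H(P)$-interior structure map $kC_H(P)\to A_P$ induced by $\br_P\circ\iota_A$. Since $i_P=\br_P(i)$ and $\iota_A$ sends relative traces and $\m$-multiples to relative traces and $\m$-multiples, a short computation lifting $i_P$ via its $P$-fixed representative $i$ yields $\iota_{A_P}(i_P)=\br_{\Delta P}(\iota_A(i))$, hence $\Phi(\tilde\jmath)=\br_{\Delta P}(\tilde i)$. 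Restricting $\Phi$ to the corresponding corners then gives $\tilde\jmath B\tilde\jmath\,\isom\, \br_{\Delta P}(\tilde i)\,A_P\,\br_{\Delta P}(\tilde i)$, and the elementary identity $\Br_{\Delta P}(eAe)=\br_{\Delta P}(e)\,\Br_{\Delta P}(A)\,\br_{\Delta P}(e)$ for a $\Delta P$-fixed idempotent $e\in A$, applied to $e=\tilde i$, identifies the right-hand side with $\Br_{\Delta P}(\tilde i A\tilde i)=\Br_{\Delta P}(\ringO Hi\otimes_{\ringO R}S\otimes_{\ringO R}i\ringO H)$. The resulting $\Phi_i$ is $H$-interior because both corner algebras inherit natural $H$-interior structures from the ambient induced algebras and $\Phi$ was $H$-interior before restriction.

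The main obstacle is the identification $\Phi(\tilde\jmath)=\br_{\Delta P}(\tilde i)$: this is where the $H$-interior (and not merely $C_H(P)$-interior) nature of $\Phi$ is essential, and where one must carefully unpack the interaction between Puig's chosen extension of the interior structure and the Brauer morphism. Everything else reduces to routine bookkeeping with corner subalgebras and standard properties of the Brauer quotient.
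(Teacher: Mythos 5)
Your overall strategy --- cut $\Phi$ down to corners on both sides and match the cut idempotents via the $H$-interior property and the compatibility of $\br_P$ with the structure maps --- is the same as the paper's, and the observation that $\iota_{A_P}(\br_P(i))=\br_{\Delta P}(\iota_A(i))$ is correct and is indeed the heart of the matter. But the idempotent you cut with is the wrong one, and this is not a cosmetic slip: it changes which corner you land on.

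You set $\tilde i=\iota_A(i)$ and claim that the corner $\tilde i A\tilde i$ of $A=\ringO H\otimes_{\ringO R}\Endom_\ringO(V)\otimes_{\ringO R}\ringO H$ is $\ringO Hi\otimes_{\ringO R}\Endom_\ringO(V)\otimes_{\ringO R}i\ringO H$. In fact the interior structure map satisfies $\iota_A(i)\cdot(a\otimes s\otimes b)=ia\otimes s\otimes b$ and $(a\otimes s\otimes b)\cdot\iota_A(i)=a\otimes s\otimes bi$, so
\[
\iota_A(i)\,A\,\iota_A(i)\ =\ i\,\ringO H\otimes_{\ringO R}\Endom_\ringO(V)\otimes_{\ringO R}\ringO H\,i,
\]
which for a general idempotent $i\in(\ringO H)^R$ is a different subspace of $A$ than the one in the statement. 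The lemma requires the idempotents the paper actually uses, namely
\[
u=\Tr_R^H(i\alpha i)\in A^H,\qquad v=\Tr_R^H(i_P\beta i_P)\in B^H,
\]
where $\alpha=1_H\otimes 1\otimes 1_H$ and $\beta=1_H\otimes 1\otimes 1_H$ are the canonical embedding idempotents; a direct computation in the induced algebra gives $uAu=\ringO Hi\otimes_{\ringO R}\Endom_\ringO(V)\otimes_{\ringO R}i\ringO H$ and $vBv=kHi_P\otimes_{kR}\Br_{\Delta P}(\Endom_\ringO(V))\otimes_{kR}i_PkH$, which are the algebras occurring in the statement. The matching then falls out painlessly: since $\Phi(\beta)=\alpha_P$ and $\Phi$ is $H$-interior (hence commutes with the relative trace and satisfies $\Phi\circ\iota_B=\iota_{A_P}$), one gets $\Phi(v)=\Tr_R^H\bigl(\iota_{A_P}(i_P)\alpha_P\iota_{A_P}(i_P)\bigr)=\br_P(u)=u_P$, and restricting $\Phi$ to $vBv$ gives the required isomorphism. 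So the gap is the identification of the corner subalgebras; once you replace $\iota_A(i)$, $\iota_B(i_P)$ by $u$, $v$, your argument aligns with the paper's.
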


\begin{proof}
With the notations of the proof of Lemma \ref{lem:slash induction}, we consider the idempotent $u = \Tr_R^H(i\alpha i)$ of the algebra $A^H$, and the idempotent $u_P=\br_P(u)=\Tr_R^H(i_P\alpha_Pi_P)$ of the algebra $(A_P)^H$. A~direct computation in the induced algebra $A$ yields
\[
uAu = \ringO Hi\otimes_{\ringO R} S \otimes_{\ringO R} i\ringO H
, \quad \text{ so that } \quad
u_PA_Pu_P \simeq \Br_{\Delta P}(\ringO Hi\otimes_{\ringO R} S \otimes_{\ringO R} i\ringO H).
\]
Then we consider the idempotent $v = \Tr_R^H(i_P\otimes 1_{\Br_{\Delta P}(S)} \otimes i_P) = \Tr_R^H(i_P\beta i_P)$ of the algebra $B^H$. The same computation brings
\[
vBv = kHi_P\otimes_{kR} \Br_{\Delta P}(S) \otimes_{kR} i_P kH.
\]
Since the morphism $\Phi$ sends $\beta$ to $\alpha_P$ and commutes with the relative trace map, we obtain $\Phi(v)=u_P$. So $\Phi$ induces an isomorphism of $H$-interior algebras $\Phi_i: vBv\to u_PA_Pu_P$.
\end{proof}

We can now prove the main result of this section.

\begin{lem}
\label{lem:slash functor subpair image}
Let $G$ be a finite group, $e$ be a block of the algebra $\ringO G$, and $\catm{\ringO Ge}$ be a Brauer-friendly category of $\ringO Ge$-modules. Let $(P,e_P)$ be an $e$-subpair of the group $G$, $H$ be a subgroup of $G$ such that $PC_G(P)\leqslant H\leqslant N_G(P,e_P)$, and
\[
Sl_{(P,e_P)} \,:\, \catm{\ringO Ge} \ \to\ \catmod{k\bar H\bar e_P}
\]
be a $(P,e_P)$-slash functor. Then there exists a Brauer-friendly category $\catm{k\bar H\bar e_P}$ of $k\bar H\bar e_P$-modules that contains the essential image of $Sl_{(P,e_P)}$.
\end{lem}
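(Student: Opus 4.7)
The plan is to construct the target Brauer-friendly category by identifying the source triples of indecomposable summands of $Sl_{(P,e_P)}(M)$ as $P$-slashed versions of source triples of $M$, and then checking that the resulting collection is a family of compatible fusion-stable endopermutation source triples.

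First, I would reduce to an indecomposable $M \in \catm{\ringO Ge}$ with source triple $(R, e_R, U)$, and use Lemma \ref{lem:vertex subpair and Brauer}(ii) to realise $M$ as a direct summand of $\ringO G j \otimes_{\ringO D} \Ind_R^D U$ for a defect group $D$ containing $R$ and a primitive idempotent $j \in (\ringO Ge)^D$ with $\bar e_D \br_D(j) \neq 0$. Up to conjugation I may assume $(P, e_P) \leqslant (D, e_D)$; if no such configuration is compatible with the choice of $j$, one verifies that $Sl_{(P,e_P)}(M)$ vanishes. Lemmas \ref{lem:slash induction} and \ref{lem:slash induction with idempotent} then apply to $W = \Ind_R^D U$ and to the idempotent $e_P j$, producing an isomorphism of $H$-interior algebras between $\Br_{\Delta P}(\Endom_\ringO(e_P M))$ and an algebra induced from the slashed endopermutation $k(N_D(P)/P)$-module $W[P]$ furnished by Dade's theory.

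From this isomorphism, the characterisation of vertex subpairs given by Lemma \ref{lem:vertex subpair}, applied inside $\bar H$ with respect to the block $\bar e_P$, identifies the indecomposable summands of $Sl_{(P,e_P)}(M)$: their vertex subpairs are supported in the image of $(N_D(P, e_P) \cap H)/P$, and their sources are capped indecomposable summands of restrictions of $W[P]$. These sources are endopermutation modules by construction, and their fusion-stability in the slashed Brauer category follows from the fusion-stability of $W$ with respect to $(D, e_D)$. Pairwise compatibility of the slashed source triples arising from two objects $M_1, M_2$ of $\catm{\ringO Ge}$ reduces, by Lemma \ref{lem:compatibility conjugacy induction}(iii), to the fact that $\Ind_{R_1}^D U_1 \oplus \Ind_{R_2}^D U_2$ is a fusion-stable endopermutation $\ringO D$-module; its slash is again a fusion-stable endopermutation module by the same lemma applied in the quotient.

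I would then define $\catm{k\bar H\bar e_P}$ as the full subcategory of $\catmod{k\bar H\bar e_P}$ whose objects are direct sums of indecomposables with source triples compatible with the family of slashed source triples constructed above; the previous step ensures that this is a Brauer-friendly category containing the essential image of $Sl_{(P,e_P)}$. The main obstacle is the second step: correctly matching, via Lemmas \ref{lem:slash induction} and \ref{lem:slash induction with idempotent}, the idempotent $\br_P(e_P j)$ of the induced algebra with a source idempotent of the appropriate block of $k\bar H$, and tracking how the $H$-interior extension supplied by Theorem \ref{thm:slash functor subpair} (unique up to a linear character) acts on the slashed source triples. The linear character twist does not alter which endopermutation module serves as a source, so it poses no obstruction to the Brauer-friendliness of the resulting category.
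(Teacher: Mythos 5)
Your overall plan — build the target category from slashed source triples, using Lemmas \ref{lem:slash induction}/\ref{lem:slash induction with idempotent} to compute the slash of an induced module and Lemma \ref{lem:compatibility conjugacy induction} to check compatibility — matches the paper's strategy. But there is a genuine gap in the middle step, where you try to apply Lemmas \ref{lem:slash induction} and \ref{lem:slash induction with idempotent} to the global decomposition $M \mid \ringO G j \otimes_{\ringO D}\Ind_R^D U$ supplied by Lemma \ref{lem:vertex subpair and Brauer}(ii). Those two lemmas are stated for a module of the form $\Ind_R^H V$ where $P$ is \emph{normal} in the ambient group $H$ and $P \leqslant R \leqslant H$. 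In your setup the ambient group is $G$, $P$ is not normal in $G$, and $D$ is a defect group of $e$ rather than a $p$-subgroup of $N_G(P,e_P)$; so the hypotheses are not satisfied, and $\Br_{\Delta P}(\Endom_\ringO(e_P M))$ cannot be read off directly from such a decomposition. Repairing this requires a Mackey decomposition along $H\backslash G/D$, which you do not address.

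The paper avoids this by first localising to $H$. It applies Lemma \ref{lem:Brauer-friendly restriction} to split $e_P M = L \oplus L'$, where $L'$ has vertices not containing $P$ (hence dies under slashing) and $L$ is a Brauer-friendly $\ringO He_P$-module. An indecomposable summand of $L$ is then written as $\ringO H i \otimes_{\ringO R} W$ with $(R,e_R,W)$ a source triple \emph{over $H$} and $P \leqslant R \leqslant H$, which is exactly the form Lemma \ref{lem:slash induction with idempotent} requires. The passage from source triples of $M$ over $G$ to source triples of $L$ over $H$ is where Theorem \ref{thm:source triples and restriction} enters, and you do not invoke it. Without the reduction to $H$ and without Theorem \ref{thm:source triples and restriction}, the identification of the vertex subpairs and sources of the indecomposable summands of $Sl_{(P,e_P)}(M)$ — which you describe informally as ``supported in the image of $(N_D(P,e_P)\cap H)/P$'' — is not justified. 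Your closing remarks about compatibility via Lemma \ref{lem:compatibility conjugacy induction}(iii) and invariance of source triples under a linear character twist are sound, and the construction of $\catm{k\bar H\bar e_P}$ as the category generated by the slashed source triples is the right idea; the missing piece is the localisation step that makes the slash-of-induction computation applicable.
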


\begin{proof}
We let $\mathcal S$ be the set of source triples $(Q,e_Q,V)$ of the group $G$ for which there exists a source triple $(R,e_R,W)$ of an indecomposable direct summand of an object of $\catm{\ringO Ge}$ such that $(P,e_P)\trianglelefteq (Q,e_Q)\leqslant (R,e_R)$  and $V$ is a direct summand of the $\ringO Q$-module $\Res^{R}_Q W$.
We let $\bar{\mathcal S}$ be the set of triples $(\bar Q,\bar e_Q,\bar V)$ for which there exists an element $(Q,e_Q,V)$ of the set $\mathcal S$ such that $\bar Q = Q/P$, $\bar e_Q = \br_P(e_Q)$, and the $k\bar Q$-module $\bar V$ is a capped indecomposable direct summand of a $P$-slashed module $V[P]$. We denote by $\catm{\ringO He_P}$ (\emph{resp.} $\catm{k\bar H\bar e_P}$) the full subcategory of $\catmod{\ringO He_P}$ (\emph{resp.} $\catmod{k\bar H\bar e_P}$) of which the objects are the direct sums of indecomposable modules with source triples in $\mathcal S$ (\emph{resp.} $\bar{\mathcal S}$). These are Brauer-friendly categories.

We know from Lemma \ref{thm:slash functor subpair} that there exists a $(P,e_P)$-slash functor $Sl'_{(P,e_P)}:\catm{\ringO He_P}\to\catmod{k\bar H\bar e_P}$. Let $M$ be an object of the category $\catm{\ringO Ge}$. By Lemma \ref{lem:Brauer-friendly restriction}, the $\ringO He_P$-module $e_PM$ admits the decomposition $e_P M = L \oplus L'$, where $L$ is a Brauer-friendly $\ringO He_P$-module and $L'$ is a direct sum of indecomposable $\ringO He_P$-modules with vertices that do not contain the $p$-subgroup $P$. It follows from Lemma \ref{thm:slash functor subpair} (ii) that the slashed modules $Sl_{(P,e_P)}(M)$ and $Sl'_{(P,e_P)}(L)$ are isomorphic, up to twisting by a linear character of the group $\bar H/\bar C_G(P)$. Notice that such a twist preserves the source triples of an indecomposable $k\bar H\bar e_P$-module.

Let $(\bar Q,\bar e_Q,\bar V)$ be a source triple of an indecomposable direct summand of  $Sl_{(P,e_P)}(M)$, hence of $Sl'_{(P,e_P)}(L)$. To study this triple, we may first suppose that the $\ringO He_P$-module $L$ is indecomposable. Then we may suppose, as in the proof of Theorem \ref{thm:source triples and restriction}, that $L=\ringO Hi\otimes_{\ringO R} W$, with $(R,e_R,W)$ a source triple of the group $H$ that lies in the set $\mathcal S$, and $i$ a primitive idempotent of the algebra $(\ringO H)^R$ such that $\bar e_R\br_R(i)\neq 0$. With this assumption, we know from Lemma \ref{lem:slash induction with idempotent} that the slashed module $Sl'_{(P,e_P)}(L)$ is isomorphic to the $k\bar H\bar e_P$-module $L_P=kH\bar \br_P(i)\otimes_{kQ/P} W[P]$.
Then we deduce from Theorem \ref{thm:source triples and restriction} that the subpair $(\bar R,\bar e_R)$ is contained in $(Q/P,\br_P(e_Q))$, and that $W$ is isomorphic to a direct summand of  the $k\bar R$-module $\Res^{Q/P}_{\,\bar R}V[P]$. Thus $Sl_{(P,e_P)}(M)$ lies in $\catm{k\bar H \bar e_P}$.
\end{proof}

Lemma \ref{lem:slash functor subpair image} enables us to study the transitivity of slash functors.

\begin{lem}
\label{lem:transitivity of the slash construction}
Let $G$ be a finite group, $e$ be a block of the group $G$, and $\catm{\ringO Ge}$ be a Brauer-friendly category of $\ringO Ge$-modules. 
\begin{enumerate}[(i)]
\item
Let $(P,e_P)\,\triangleleft\, (Q,e_Q)$ be $e$-subpairs of the group $G$. 
Let 
$
Sl_{(P,e_P)}:\catm{\ringO Ge}\to\catmod{k\bar N_G(P,e_P)\bar e_P}
$
be a $(P,e_P)$-slash functor, and $\catm{k\bar N_G(P,e_P)\bar e_P}$ be a Brauer-friendly category of $k\bar N_G(P,e_P)\bar e_P$-modules that contains the essential image of $Sl_{(P,e_P)}$. Let 
$
Sl_{(Q/P,\bar e_Q)}:\catm{k\bar N_G(P,e_P)\bar e_P}\to\catmod{k\bar N_G(P,Q,e_Q)\bar e_Q}
$
be an $(Q/P,\bar e_Q)$-slash functor. Then the composition 
\[
Sl_{(Q/P,\bar e_Q)} \circ Sl_{(P,e_P)} \,:\,
\catm{\ringO Ge}\ \to\ \catmod{k\bar N_G(P,Q,e_Q)\bar e_Q}
\]
is an $(Q,e_Q)$-slash functor.
\item Let $(P,e_P)$ be an $e$-subpair, and
$
Sl_{(P,e_P)}:\catm{\ringO Ge}\to\catmod{k\bar N_G(P,e_P)\bar e_P}
$
be a $(P,e_P)$-slash functor. For an element $g$ of the group $G$, let $g_\star: \catmod{k\bar N_G(P,e_P)\bar e_P}\to \catmod{k\bar N_G(\null^gP,\null^g e_P)\null ^g\bar e_P}$ stand for the ``twist by $g$''.
Then the composition 
\[
g_\star \circ Sl_{(P,e_P)} \,:\,
\catm{\ringO Ge}\ \to\ \catmod{k\bar N_G(\null^gP,\null^g e_P)\null ^g\bar e_P}
\]
is a $\,^g(P,e_P)$-slash functor.
\end{enumerate}
\end{lem}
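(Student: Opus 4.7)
The plan is to verify, for both parts, the data and axioms of Definition~\ref{defn:slash functof subpair} for the composition. The assignments on objects and morphisms, and the preservation of identities and composition, are all forced by functoriality and present no difficulty. The substantive content is the factorization-isomorphism axiom: exhibiting a natural isomorphism $\Br_{\Delta Q}(\Homom_\ringO(e_Q L, e_Q M)) \isom \Homom_k(Sl(L), Sl(M))$ compatible with the appropriate interior and bimodule structures.

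For part~(i), I would first check that the codomain is consistent with a $(Q,e_Q)$-slash functor: since $(P,e_P) \triangleleft (Q,e_Q)$ implies $N_G(Q,e_Q) \subseteq N_G(P,e_P)$, one has $\bar N_G(P,Q,e_Q) = N_G(Q,e_Q)/Q$, which is an admissible target. A morphism $u:L\to M$ of $\ringO Q$-modules is, \emph{a fortiori}, a morphism of $\ringO P$-modules, and the factorization axiom for $Sl_{(P,e_P)}$ ensures that $Sl_{(P,e_P)}(u)$ is in fact $k(Q/P)$-linear, so that $Sl_{(Q/P,\bar e_Q)}$ can legitimately be applied to it.

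The heart of the argument is the factorization isomorphism, which I would derive from the transitivity of the Brauer construction for subpairs: for any $\ringO\Delta G$-module $X$ and any inclusion of $e$-subpairs $(P, e_P) \trianglelefteq (Q, e_Q)$, there is a canonical isomorphism
\[
\Br_{\Delta Q}(e_Q X e_Q) \ \simeq\ \Br_{\Delta(Q/P)}\bigl(\bar e_Q\, \Br_{\Delta P}(e_P X e_P)\, \bar e_Q\bigr),
\]
a standard consequence of the relation $\bar e_Q \br_Q(e_P) = \bar e_Q$ (which encodes $(P, e_P) \leqslant (Q, e_Q)$) and the transitivity $\Br_{\Delta Q} \cong \Br_{\Delta(Q/P)} \circ \Br_{\Delta P}$ for Brauer quotients of $p$-subgroups. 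Applied to $X = \Homom_\ringO(L, M)$ and combined with the two slash-functor factorization axioms, this yields the required natural isomorphism for the composition.

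For part~(ii), the twist $g_\star$ is a relabeling of the action along conjugation by $g$, and the slash functor axioms transport via the canonical isomorphism $\null^g\Br_{\Delta P}(X) \simeq \Br_{\Delta(\null^g\!P)}(\null^g X)$ together with $\null^g e_P = e_{\null^g\!P}$; this is a routine verification. The main obstacle, as expected, lies in part~(i): the individual isomorphisms are standard, but verifying that the composite respects the $k(C_G(Q) \times C_G(Q)) \Delta N_G(Q, e_Q)$-module structure---rather than merely the underlying $k$-vector spaces---requires careful bookkeeping of the interior and bimodule structures through each layer. Once the transitivity of the Brauer quotient is formulated at this correct level of structure, the rest of the verification is formal.
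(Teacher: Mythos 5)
Your overall strategy matches the paper's: reduce to the case where everything is normalised (the paper takes $G = N_G(P,Q,e_Q)$ and $e = e_P = e_Q$), then identify the composed factorization isomorphism with a transitivity isomorphism for the Brauer construction applied to $\Homom_\ringO(L,M)$. However, there is a genuine gap in the way you state the key step. You claim that for \emph{any} $\ringO\Delta G$-module $X$ there is a canonical isomorphism
\[
\Br_{\Delta Q}(e_Q X e_Q) \ \simeq\ \Br_{\Delta(Q/P)}\bigl(\bar e_Q\, \Br_{\Delta P}(e_P X e_P)\, \bar e_Q\bigr),
\]
invoking the transitivity $\Br_{\Delta Q}\cong\Br_{\Delta(Q/P)}\circ\Br_{\Delta P}$. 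But this transitivity is \emph{false} for arbitrary modules: for a general $\ringO\Delta Q$-module there is only a natural map $\Br_{\Delta Q}(X)\to \Br_{\Delta(Q/P)}(\Br_{\Delta P}(X))$, and it need not be an isomorphism. The transitivity isomorphism holds for $p$-permutation modules, and this is exactly where the Brauer-friendly hypothesis enters: the paper observes that, because $L$ and $M$ are compatible Brauer-friendly modules, $\Homom_\ringO(L,M)$ is a $p$-permutation $\ringO\Delta Q$-module, and \emph{then} appeals to Rouquier's result (cited as \cite[\S 2.2]{Rouquier1998}) to obtain the isomorphism. Your proposal never records this essential use of the Brauer-friendly assumption, so the ``standard consequence'' you rely on would fail in the generality you state it.

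A separate, smaller inaccuracy: from $(P,e_P)\triangleleft(Q,e_Q)$ one gets $Q\leqslant N_G(P,e_P)$, but $N_G(Q,e_Q)$ need not normalise $P$, so the identity $\bar N_G(P,Q,e_Q) = N_G(Q,e_Q)/Q$ you assert does not hold in general; the correct observation is merely that $QC_G(Q)\leqslant N_G(P,Q,e_Q)\leqslant N_G(Q,e_Q)$, which makes $H = N_G(P,Q,e_Q)$ an admissible intermediate subgroup in Definition~\ref{defn:slash functof subpair} for a $(Q,e_Q)$-slash functor. Aside from these two points, the verification of compatibility of module structures that you flag as ``formal'' is indeed treated as such in the paper.
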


\begin{proof} In order to prove (i), we may assume that $G=N_G(P,Q,e_Q)$ and $e=e_P=e_Q$. We fix two compatible Brauer-friendly $\ringO Ge$-modules $L$ and $M$. Thus $\Homom_\ringO(L,M)$ is a $p$-permutation $\ringO \Delta Q$-module, and we know from \cite[\S 2.2]{Rouquier1998} that the natural map
\[
\Br_{\Delta Q}(\Homom_\ringO(L,M))
\ \to\ 
\Br_{\Delta Q/P}\circ\Br_{\Delta P}(\Homom_\ringO(L,M))
\]
is an isomorphism. This proves (i); the proof of (ii) is similar.
\end{proof}

\section{A parametrisation of the indecomposable Brauer-friendly mo\-dules}
\label{sec:ParamIndBF}

In this section, we prove that an indecomposable Brauer-friendly $\ringO Ge$-module $X$ is characte\-rized, up to isomorphism, by a conjugacy class of quadruples of the form $(P,e_P,V,\bar X)$, where $(P,e_P,V)$ is a fusion-stable endopermutation source triple of the group $G$, and $\bar X$ is a projective indecomposable $k\bar N_G(P,e_P)\bar e_P$-module. The precise statement is the following.

\begin{thm}
\label{thm:BF Puig correspondence}
Let $G$ be a finite group and $e$ be a block of the group $G$.
Let $(P,e_P,V)$ be a fusion-stable endopermutation source triple of the group $G$ with respect to the block $e$. Let $\catm{\ringO Ge}$ be a Brauer-friendly category of $\ringO Ge$-modules that is ``big enough'', \emph{i.e.}, such that any finite direct sum of indecomposable $\ringO Ge$-modules with source triple $(P,e_P,V)$ is an object of $\catm{\ringO Ge}$.
Let 
\[
Sl_{(P,e_P)}:\catm{\ringO Ge}\to \catmod{k\bar N_G(P,e_P)\bar e_P}
\]
be a $(P,e_P)$-slash functor.
Then the mapping $X\mapsto Sl_{(P,e_P)}(X)$ induces a one-to-one corres\-pondence between the isomorphism classes of indecomposable $\ringO Ge$-modules with source triple $(P,e_P,V)$ and the isomorphism classes of projective indecomposable $k\bar N_G(P,e_P)\bar e_P$-modules.
\end{thm}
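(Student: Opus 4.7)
The overall strategy is to reduce to the case where $P \trianglelefteq G$ and $e = e_P$ via the Green correspondence of Lemma \ref{lem:vertex subpair}, identify the slashed image of a universal test module with the regular $k\bar G \bar e$-module, and then match primitive idempotents on the two sides.

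Setting $N = N_G(P,e_P)$, Lemma \ref{lem:vertex subpair} provides a bijection $X \mapsto L$ between indecomposable $\ringO Ge$-modules with vertex subpair $(P,e_P)$ and indecomposable $\ringO Ne_P$-modules with vertex $P$, preserving source triples. The slash functor $Sl_{(P,e_P)}$ depends only on the $\ringO P$-module $e_PX$ and kills any summand of vertex strictly smaller than $P$ (whose Brauer quotient vanishes), so $Sl_{(P,e_P)}(X) \simeq Sl_{(P,e_P)}(L)$. We may therefore assume $G = N$, $P \trianglelefteq G$, and $e = e_P$. Set $L_0 = \ringO Ge \otimes_{\ringO P} V$, a direct summand of $\Ind_P^G V$. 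By Lemma \ref{lem:vertex subpair}~(i), every indecomposable $\ringO Ge$-module with source triple $(P,e,V)$ is a summand of $L_0$; conversely, since $V$ is indecomposable, any indecomposable summand of $L_0$ of vertex $P$ has source isomorphic to $V$. Thus the isomorphism classes of indecomposable $\ringO Ge$-modules with source triple $(P,e,V)$ correspond to the conjugacy classes of vertex-$P$ primitive idempotents of $E := \Endom_{\ringO Ge}(L_0)$.

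Next I would apply Lemma \ref{lem:slash induction with idempotent} with $H = G$, $R = P$ and $i = e$: since $V$ is a capped indecomposable endopermutation $\ringO P$-module we have $\Br_{\Delta P}(\Endom_\ringO(V)) \simeq k$, and the lemma yields an isomorphism of $\bar G$-interior algebras $\Br_{\Delta P}(\Endom_\ringO(L_0)) \simeq \Endom_k(k\bar G \bar e)$. Combined with the defining isomorphism of the slash functor, this identifies $Sl_{(P,e_P)}(L_0)$ with the regular $k\bar G \bar e$-module, up to the twisting ambiguity of Theorem \ref{thm:slash functor subpair}~(ii), which only permutes the projective indecomposable $k\bar G \bar e$-modules. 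The slash functor then induces an algebra morphism $\Phi : E \to \Endom_{k\bar G \bar e}(Sl(L_0)) \simeq (k\bar G \bar e)^{\mathrm{op}}$, whose conjugacy classes of primitive idempotents parametrise the isomorphism classes of projective indecomposable $k\bar G \bar e$-modules. It remains to show that $\Phi$ restricts to a bijection between the vertex-$P$ primitive idempotents of $E$ and the primitive idempotents of $(k\bar G \bar e)^{\mathrm{op}}$.

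The main obstacle is precisely this matching step. Primitive idempotents of $E$ corresponding to summands of vertex strictly smaller than $P$ are sent to zero by $\Phi$, whereas those of vertex $P$ are not; so $\Phi$ restricts to a map on the relevant sets, and one has to check that on vertex-$P$ idempotents $\Phi$ is injective up to conjugation and that every primitive idempotent of $(k\bar G \bar e)^{\mathrm{op}}$ lifts. Both rely on the fact that $\Endom_\ringO(L_0)$ is a $p$-permutation $\ringO\Delta G$-module (because $L_0$ is Brauer-friendly), so that the Brauer construction commutes with $\Delta \bar G$-invariants; combined with Krull--Schmidt and a standard lifting-of-idempotents argument in the semiperfect ring $E$, this yields the desired bijection. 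Composing with the correspondence of the first paragraph then gives the theorem.
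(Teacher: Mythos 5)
Your proposal is correct and takes essentially the same route as the paper: reduce via Green correspondence to the normaliser $N_G(P,e_P)$, identify $Sl'_{(P,e_P)}$ of the test module $L=\ringO He_P\otimes_{\ringO P}V$ with the regular module $k\bar H\bar e_P$ using Lemma~\ref{lem:slash induction with idempotent} and the fact that $V[P]\simeq k$, and then match primitive idempotents through the surjection $\Endom_{\ringO H}(L)\twoheadrightarrow\Endom_{k\bar H}(Sl'(L))$. One small remark: after the reduction to $H=N_G(P,e_P)$ every indecomposable summand of $L$ already has vertex exactly $P$ (its restriction to the normal subgroup $P$ is a sum of $H$-conjugates of the capped module $V$, so no summand can have smaller vertex), so the case distinction you worry about in your last paragraph doesn't actually arise; and the surjectivity of the Brauer map on $H$-fixed points is justified by $L$ being endo-$p$-permutation over $H$ (Lemma~\ref{lem:Brauer-friendly restriction}~(ii)) rather than merely Brauer-friendly.
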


\begin{proof}
We write $H=N_G(P,e_P)$. There exists a Brauer-friendly category $\catm{\ringO He_P}$ of $\ringO He_P$-modules that contains any finite direct sum of indecomposable $\ringO He_P$-modules with source triple $(P,e_P,V)$, and there exists a $(P,e_P)$-slash functor $Sl'_{(P,e_P)}:\catm{\ringO He_P}\to \catmod{k\bar H\bar e_P}$.

Let $X$ be an indecomposable $\ringO Ge$-module with source triple $(P,e_P,V)$. Let $X'$ be its Green correspondent, an object of the category $\catm{\ringO He_P}$. Up to twisting the slash functor $Sl'_{(P,e_P)}$ by a linear character of the group $\bar H/\bar C_G(P)$, we may assume that the slashed modules $Sl_{(P,e_P)}(X)$ and $Sl_{(P,e_P)}(X')$ are isomorphic. Once this assumption has been made for $X$, the same is necessarily true for any other indecomposable $\ringO Ge$-module with source triple $(P,e_P,V)$.

By definition of a source triple, $X'$ is isomorphic to a direct summand of the $\ringO He_P$-module $L=\ringO He_P\otimes_{\ringO P} V$. A $P$-slashed module relative to the capped indecomposable endopermutation $\ringO P$-module $V$ is the $k$-vector space $k$. Thus, by Lemma \ref{lem:slash induction with idempotent}, there is an isomorphism of $k\bar H\bar e_P$-modules
\[
Sl'_{(P,e_P)}(L) \ \simeq\ kH\bar e_P\otimes_{kP} k \ \simeq\ k\bar H\bar e_P.
\]
\hspace{\parindent}Since $L$ is an endo-$p$-permutation $\ringO H$-module, the Brauer map $\br_{\Delta P}^{\Endom_\ringO(L)}$ induces an epimorphism $[\Endom_\ringO(L)]^H \twoheadrightarrow [\Br_{\Delta P}(\Endom_\ringO(L))]^H$. In other words, the slash functor $Sl'_{(P,e_P)}$ induces an epimorphism
\[
\beta:\Endom_{\ringO H}(L) \ \twoheadrightarrow\ \Endom_{k H}(Sl'_{(P,e_P)}(L))
\]
By a classical result about the lifting of idempotents such as \cite[Theorem 3.1]{Thévenaz1995}, the epimorphism $\beta$ induces a one-to-one correspondence between the conjugacy classes of primitive idempotents of the algebra $\Endom_{\ringO H}(L)$ and the conjugacy classes of primitive idempotents of $\Endom_{k H}(Sl'_{(P,e_P)}(L))$. In other words, the mapping $X'\mapsto Sl'_{(P,e_P)}(X')$  induces a one-to-one correspondence between the isomorphism classes of indecomposable direct summands of the $\ringO H$-module $L$ and the isomorphism classes of indecomposable direct summands of the $k\bar H$-module $Sl'_{(P,e_P)}(L)$, \emph{i.e.}, between the isomorphism classes of indecomposable $\ringO He_P$-modules with source triple $(P,e_P,V)$ and the isomorphism classes of indecomposable projective $k\bar H\bar e_P$-modules. Then the above remark concerning Green correspondents completes the proof.
\end{proof}

The one-to-one correspondence of Theorem \ref{thm:BF Puig correspondence} may be seen as an instance of the Puig corres\-pondence defined in \cite{Puig1988construction}. An indecomposable Brauer-friendly $\ringO Ge$-module $X$ is characterised by a quadruple $((P,e_P),V,\bar X)$, as described at the beginning of this section. The correspondence $X\leftrightarrow \bar X$ is usually not canonical; it depends on the isomorphism class of the slash functor $Sl_{(P,e_P)}$. This is consistent with what Thévenaz explains in \cite[before Example 26.5]{Thévenaz1995}.

\end{document}